\let\cref\Cref
\newcommand{\myemail}[1]{\href{mailto:#1}{#1}}
\newcommand{\R}{\mathbb{R}}
\newcommand{\Z}{\mathbb{Z}}
\newcommand{\K}{\mathcal{K}}
\DeclareMathOperator{\rk}{rk}
\newtheoremstyle{thm}{}{}{\itshape}{}{\bfseries}{}{ }{} 
\newtheoremstyle{definition}{}{}{}{}{\bfseries}{}{ }{} 
\theoremstyle{thm}
\newtheorem{Theorem}{Theorem}[section]
\newtheorem{theorem}[Theorem]{Theorem}
\newtheorem{lemma}[Theorem]{Lemma}
\newtheorem{proposition}[Theorem]{Proposition}
\newtheorem*{theorem*}{Theorem}
\newtheorem{conjecture}[Theorem]{Conjecture}
\theoremstyle{definition}
\newtheorem{definition}[Theorem]{Definition}
\newtheorem{rem}[Theorem]{Remark}
\begin{document}


\title[On unknotting fibered positive knots and braids]{On unknotting fibered positive knots and braids}

\author[M.\ Kegel]{Marc Kegel}
\address{Humboldt-Universit\"at zu Berlin, Rudower Chaussee 25, 12489 Berlin, Germany \newline \indent Universität Heidelberg, Im Neuenheimer Feld 205, 69120 Heidelberg, Germany}
\email{\myemail{kegemarc@hu-berlin.de}, \myemail{kegelmarc87@gmail.com}}

\author[L.\ Lewark]{Lukas Lewark}
\address{ETH Z\"urich, R\"amistrasse 101, 8092 Z\"urich, Switzerland}
\email{\myemail{lukas.lewark@math.ethz.ch}}

\author[N.\ Manikandan]{Naageswaran Manikandan}
\address{Humboldt-Universit\"at zu Berlin, Rudower Chaussee 25, 12489 Berlin, Germany}
\email{\myemail{naageswaran.manikandan@hu-berlin.de}}

\author[F.\ Misev]{Filip Misev}
\address{Universit\"at Regensburg, 93040 Regensburg, Germany}
\email{\myemail{filip.misev@mathematik.uni-regensburg.de}}

\author[L.\ Mousseau]{Leo Mousseau}
\address{Humboldt-Universit\"at zu Berlin, Rudower Chaussee 25, 12489 Berlin, Germany}
\email{\myemail{leo.mousseau@t-online.de}}

\author[M.\ Silvero]{Marithania Silvero}
\address{Universidad de Sevilla, Avda. Reina Mercedes, 41012 Seville, Spain}
\email{\myemail{marithania@us.es}}

\pdfstringdefDisableCommands{%
  \def\unskip{}%
}

\hypersetup{pdfauthor={\authors},pdftitle={\shorttitle}}



\keywords{Unknotting number, positive braids, fibered positive knots, trefoil plumbings, branched coverings}
\subjclass[2010]{57M25}

\begin{abstract}
The unknotting number $u$ and the genus $g$ of braid positive knots are equal, as shown by Rudolph.
We prove the stronger statement that any positive braid diagram of a genus $g$ knot
contains $g$ crossings, such that changing them produces a diagram of the trivial knot.
Then, we turn to unknotting the more general class of fibered positive knots,
for which $u = g$ was conjectured by Stoimenow.
We prove that the known ways to unknot braid positive knots do not generalize to fibered positive knots.
Namely, we prove that
there are fibered positive knots that cannot be unknotted optimally along fibered positive knots;
there are fibered positive knots that do not arise as trefoil plumbings;
and there are positive diagrams of fibered positive knots of genus $g$ that do not 
contain $g$ crossings, such that changing them produces a diagram of the trivial knot.
In fact, we conjecture that one of our examples is a counterexample to Stoimenow's conjecture.
\end{abstract}

\makeatletter
\@namedef{subjclassname@2020}{%
  \textup{2020} Mathematics Subject Classification}
\makeatother

\subjclass[2020]{57K10} 

\maketitle

\section{Introduction}
\begin{wrapfigure}[16]{r}{58mm}
 	\raggedleft
 	\raisebox{0mm}[.9\height]{
\includegraphics[width=57mm]{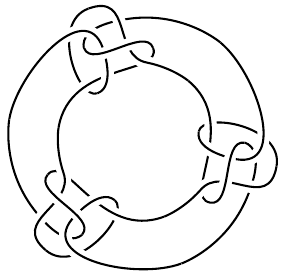}}
\captionsetup{width=.98\linewidth,justification=centering}
 	\caption{A potential counterexample~$\K$ to \cref{conj:pos_fib}.}
  \label{fig:counterexample} 
\end{wrapfigure}

This paper is inspired by the following conjecture posed by Stoimenow.

\begin{conjecture}\cite{Stoimenow03}\label{conj:pos_fib}
    If $K$ is a positive and fibered knot, then its unknotting number $u(K)$ equals its 3-genus $g(K)$.
\end{conjecture}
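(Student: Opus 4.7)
The plan is to prove the two inequalities $u(K) \geq g(K)$ and $u(K) \leq g(K)$ separately. The lower bound comes essentially for free from positivity: by Kronheimer--Mrowka (or by Rasmussen's $s$-invariant), every positive knot satisfies $g_4(K) = g(K)$, and since a single crossing change can change the slice genus by at most one we obtain $u(K) \geq g_4(K) = g(K)$. So the real content of the conjecture is the reverse inequality $u(K) \leq g(K)$.

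For this, I would try induction on $g(K)$. The base case $g(K) = 1$ is immediate: the only positive fibered knot of genus one is the trefoil, for which $u = 1$. For the inductive step, given a positive fibered knot $K$ of genus $g$, the natural goal is to produce a single crossing change in some positive diagram of $K$ which yields a new knot $K'$ that is again positive and fibered and has $g(K') = g - 1$; the inductive hypothesis then gives $u(K') \leq g - 1$ and hence $u(K) \leq g$. To locate such a crossing change, I would try to exploit the structure of the fiber surface $\Sigma$, which is the Seifert surface obtained by applying Seifert's algorithm to a positive diagram. In the case of positive \emph{braids}, Rudolph's proof works by finding a trefoil Murasugi summand inside $\Sigma$ and destabilising it via a single crossing change; my first attempt would be to generalise this decomposition strategy, looking for a trefoil or Hopf sub-band in $\Sigma$ that is inherited from the combinatorics of the positive diagram.

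The main obstacle I expect is structural: it is far from obvious that every fibered positive knot admits a nontrivial Murasugi decomposition into simpler fibered positive pieces, and I would not be surprised if this fails in general. Moreover, even if one can find a crossing change reducing the genus by one, the resulting knot $K'$ need not be fibered or positive, so the induction can fail at the very first step. Possible workarounds would be: enlarge the inductive class, proving an inductive statement about a broader class than fibered positive knots that is stable under the required crossing changes; or abandon induction altogether in favour of a more global approach, for example via the open book / contact-topological interpretation of fibered knots, using that positive fibrations support Stein fillable contact structures. The latter route, however, typically bounds only slice genus and does not produce explicit crossing changes, so it is unclear whether it can close the gap — and in view of the negative results described in the abstract, I would regard the most likely outcome as being that the inductive strategy simply breaks down and no such unknotting sequence exists for some $K$.
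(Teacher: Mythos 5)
The statement you are trying to prove is not a theorem of this paper at all: it is Stoimenow's open conjecture, which the paper neither proves nor claims to prove --- in fact, the paper constructs a fibered positive knot $\K$ of genus $7$ that it conjectures (\cref{conj:counterexample}) to have unknotting number $9$, i.e.\ it expects \cref{conj:pos_fib} to be \emph{false}. Your lower bound $u(K)\geq g(K)$ is fine (positivity gives $g_4=g$, so $u\geq g_4=g$; the paper cites Rudolph's result for strongly quasipositive knots for this). But the inductive step for $u(K)\leq g(K)$ is only a hope, not an argument: you never produce the required crossing change taking a fibered positive knot of genus $g$ to a fibered positive (or otherwise controlled) knot of genus $g-1$, and you yourself flag that the induction may fail at the first step. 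That is precisely the gap, and it is not a small one.

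Worse, the paper's results show that each concrete mechanism you propose for the inductive step breaks down. \cref{thm:unknotting} exhibits a fibered positive knot ($K11n183$, with $u=g=3$) that admits \emph{no} crossing change to any fibered positive knot with unknotting number $2$, so ``unknot optimally along fibered positive knots'' fails; \cref{thm:plumbing} shows the same knot is not a positive trefoil plumbing, so Rudolph's trefoil-summand/deplumbing strategy does not generalize; and \cref{thm:counterexample} shows that the positive diagram of $\K$ (genus $7$) cannot be turned into an unknot diagram by fewer than $9$ crossing changes within the diagram, so ``find $g$ crossings in the positive diagram'' (which does work for positive braid diagrams, \cref{thm:unknotting_sequence}) also fails. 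So enlarging the inductive class or re-choosing the diagram cannot rescue the argument along the lines you sketch, and no proof of the conjecture should be expected from this route --- if anything, the evidence in the paper points toward the conjecture being false.
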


Note that the inequality $u \geq g$ holds for all strongly quasipositive knots~\cite{Rudolph93},
so the open part of the conjecture is to show $u \leq g$.
While discussing the conjecture, we refer to \cref{fig:props} for context. 
We will begin by examining the unknotting process of knots formed as the closure of positive braids, referred to as \emph{braid positive knots}. 
These knots are both fibered~\cite{Stallings78} and positive, and they satisfy $u = g$, i.e.~the conjecture holds for them. An elegant proof of $u \leq g$ for braid positive knots is due to Rudolph~\cite{Rudolph83}. That proof is by induction, with the following induction step:
every non-trivial braid positive knot $K$ is related by a crossing change to another braid positive knot $J$ such that $g(J) = g(K) - 1$. In other words, braid positive knots may be unknotted optimally along braid positive knots.
Alternatively, the inequality $u(K) \leq g(K)$ for braid positive knots $K$
may be deduced from the fact that such knots are \emph{positive trefoil plumbings}~\cite{BaaderDehornoy16}, i.e.~the fiber surface of $K$ arises from a disk by finitely many successive plumbings of the fiber surface of the positive trefoil knot. The inequality $u(K) \leq g(K)$ holds for all positive trefoil plumbings $K$, since the genus is additive under knot plumbings and the effect of deplumbing a trefoil from a knot can be achieved by a crossing change (see \cref{fig:plumbing}).

Now, we present a third proof of $u(K) \leq g(K)$ for braid positive knots $K$, as a corollary of the following theorem, which we prove in \cref{sec:pos_braids}.
\begin{theorem}\label{thm:unknotting_sequence}
Let $D$ be a knot diagram arising as the closure of a positive braid,
and let $g$ be the 3-genus of the represented knot.
Then there exists a set of $g$ crossings in~$D$ such that changing
those crossings produces a diagram of the unknot.
\end{theorem}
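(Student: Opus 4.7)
The plan is to prove the theorem by strong induction on the number of crossings $c$ of the positive braid word $w$ whose closure is the diagram $D$. The base case $g = 0$ forces $c = n - 1$, so $D$ is already an unknot diagram and zero crossing changes suffice.

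For the inductive step ($g \geq 1$), I plan to locate a single crossing $\gamma$ of $D$ whose change produces a diagram that, after a Reidemeister II cancellation (possibly preceded by Reidemeister III moves and cyclic shifts of the closed braid), becomes the diagram of a positive braid word $w''$ with $c - 2$ crossings and $3$-genus $g - 1$. Its closure is again a knot because the induced permutation is preserved by removing the cancelling pair. The inductive hypothesis applied to $w''$ yields $g - 1$ crossings in the diagram of $w''$ whose changes produce a diagram of the unknot; tracking these back through the (crossing-preserving) sequence of Reidemeister moves to $D$, and adjoining $\gamma$, yields the sought $g$ crossings of $D$.

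The key step is to find $\gamma$. I proceed by seeking a positive braid word $w'$ obtained from $w$ by a sequence of cyclic conjugations and positive braid relations---the commutation $\sigma_i \sigma_j = \sigma_j \sigma_i$ for $|i - j| \geq 2$ and the triangle relation $\sigma_i \sigma_{i+1} \sigma_i = \sigma_{i+1} \sigma_i \sigma_{i+1}$---such that $w'$ contains a subword $\sigma_i^2$. Each such move corresponds to a cyclic shift or a Reidemeister III move and induces a bijection of crossings between the diagrams of $w$ and $w'$ that is compatible with crossing changes (for the triangle relation, this uses identities such as $\sigma_i^{-1}\sigma_{i+1}\sigma_i = \sigma_{i+1}\sigma_i\sigma_{i+1}^{-1}$). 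Writing $w' = u \sigma_i \sigma_i v$ and changing one of the two $\sigma_i$'s produces the subword $\sigma_i\sigma_i^{-1}$, which cancels by Reidemeister II, leaving the positive braid $uv$ of length $c - 2$ and genus $g - 1$. Tracing the changed crossing back through the bijection identifies the desired $\gamma$.

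The main obstacle is to prove that such a $w'$ always exists. A positive braid word has a $\sigma_i^2$-containing positive equivalent, using only the above braid relations, exactly when its length exceeds the Coxeter length $\ell(\pi)$ of its induced permutation. When the closure is a knot of genus $g \geq 1$ we have $c = 2g + n - 1 \geq n + 1$, and $\pi$ is an $n$-cycle; generically $c > \ell(\pi)$ and we are done immediately. The exceptional case is when $w$ is a \emph{simple} positive braid, i.e., a reduced expression for $\pi$ in the Coxeter sense (equivalently, a divisor of the Garside half-twist $\Delta_n$). Here the plan is to show that some cyclic conjugate $w^*$ of $w$ is not simple, by analyzing how cyclic conjugation modifies the induced permutation---it acts by conjugation by the shifted generator, changing Coxeter length by $-2$, $0$, or $+2$---and arguing that the sequence of lengths of the $c$ successive cyclic conjugates of a reduced expression for a long $n$-cycle cannot remain constantly equal to $c$. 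An alternative route in this simple case is Markov destabilization: some generator is expected to appear only once in some cyclic conjugate of a reduced expression for an $n$-cycle of length $c \geq n+1$, after which cyclic conjugation brings it to the end of the braid word and a Markov move reduces the braid index by one while preserving the genus, allowing direct application of the inductive hypothesis.
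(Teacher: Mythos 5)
Your route is genuinely different from the paper's, but as it stands it has two real gaps, one of which is conceptual. The serious one is the transfer of crossings back through the word rewriting. Your plan is to rewrite $w$ into $w'=u\sigma_i\sigma_i v$ by cyclic shifts and braid relations, change one $\sigma_i$, apply induction to $uv$, and then pull the resulting $g-1$ crossings (together with $\gamma'$) back to $D$ via a ``bijection of crossings compatible with crossing changes''. For the triangle relation no such bijection exists. The geometric correspondence of the three crossings of $\sigma_i\sigma_{i+1}\sigma_i$ and $\sigma_{i+1}\sigma_i\sigma_{i+1}$ matches outer with outer and middle with middle; for the outer crossings one indeed has $\sigma_i^{-1}\sigma_{i+1}\sigma_i=\sigma_{i+1}\sigma_i\sigma_{i+1}^{-1}$, but for the middle crossing one would need $\sigma_i\sigma_{i+1}^{-1}\sigma_i=\sigma_{i+1}\sigma_i^{-1}\sigma_{i+1}$, which is false in the braid group (the quotient of the two sides is $(\sigma_i\sigma_{i+1}^{-1})^{3}\neq 1$), and no other matching fixes this; changing both outer crossings fails as well. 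Since the crossings handed to you by the inductive hypothesis are a black box, you cannot prevent them (or $\gamma'$ itself) from sitting in a triangle in exactly this bad position during the rewriting, so the conclusion ``changing the corresponding crossings of the original diagram $D$ gives the unknot'' is unjustified. This is precisely the difficulty the paper's proof is engineered to avoid: there one picks an arc of $D$ returning to the outermost Seifert circle (or an embedded loop through a double point), changes only its underpasses, and removes the crossings it meets by a planar isotopy (plus at most one Reidemeister~I move) that leaves every other crossing of $D$ untouched, so the inductively found crossings literally remain crossings of $D$.

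The second gap is that your key existence statement is only conjectured. The exceptional case where $w$ is a reduced (permutation-braid) word is not ``non-generic'': for example $\sigma_1\sigma_2\sigma_3\sigma_1\sigma_2$ is a reduced word for the $4$-cycle $[3,4,2,1]$ whose closure is the trefoil, so this case must be handled, and your treatment consists of statements like ``is expected to'' and ``arguing that the lengths cannot remain constant'' without proof. (In this example a cyclic rotation does become non-reduced, and a Geck--Pfeiffer/Garside-cycling type argument plausibly works in general, but none is supplied.) The destabilization fallback is moreover flawed as stated: cyclic conjugation never changes how often a generator occurs, and a generator occurring exactly once yields a Markov destabilization only if it is $\sigma_1$ or $\sigma_{n-1}$; for an interior generator it gives a connected-sum decomposition, not a reduction of braid index. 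So both halves of your inductive step---producing $\sigma_i^2$ and transporting the unknotting set back to $D$---are currently unproven, and the second is the one that would still block the argument even after the combinatorics is fixed.
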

So far, we have discussed three different proofs that $u \leq g$ holds for braid positive knots.
The following three theorems (proven respectively in \cref{sec:unknotting_within,sec:trefoil_plumbings,sec:counterexample}) now demonstrate
that each one of those proofs fails to generalize to fibered positive knots.
Namely, fibered positive knots may not be unknotted optimally along fibered positive knots,
they need not be positive trefoil plumbings, and they may admit positive diagrams
that cannot be unknotted by performing $g$ many crossing changes.%
\begin{figure}[tb]
\[
\begin{tikzcd}[column sep={6.5em,between origins},row sep=small]
\fbox{\parbox{27mm}{\centering Braid positive\\ knot}} \ar[Rightarrow,rr]\ar[Rightarrow,dd] &&
\fbox{\parbox{27mm}{\centering Fibered positive\\ knot}} \ar[Rightarrow,"\text{\cite{Rud_Pos_is_SQP}}",rr]\ar[Rightarrow,"\text{\cite{Cromwell_homogeneous}}",dd]\ar[Rightarrow,dotted,ld,"?"] &&
\fbox{\parbox{27mm}{\centering Fibered strongly quasipositive knot}}\ar[Rightarrow,dd,"\text{\cite{Hedden_tightSQP}}", shift left=2] \\
& \fbox{\parbox{10mm}{\rule{0pt}{2ex}\centering $u = g$}} \\
\fbox{\parbox{27mm}{\centering Positive trefoil\\ plumbing}} \ar[Rightarrow,rr]\ar[Rightarrow,ru] &&
\fbox{\parbox{27mm}{\centering Positive Hopf\\ plumbing}} \ar[Rightarrow,rr] &&
\fbox{\parbox{27mm}{\centering Positive Hopf\\ plumbing \& deplumbing}} \ar[Rightarrow,uu,"\text{\cite{Ru_plumbing_QP}}", shift left=2]
\end{tikzcd}
\]
\caption{Implications between some properties of knots.
No further implications hold among these properties, with the only potential exception of \cref{conj:pos_fib}, drawn dotted.
For example, see \cite{MR0859157} for a fibered strongly quasipositive knot that is not a positive Hopf plumbing.}
\label{fig:props}
\end{figure}
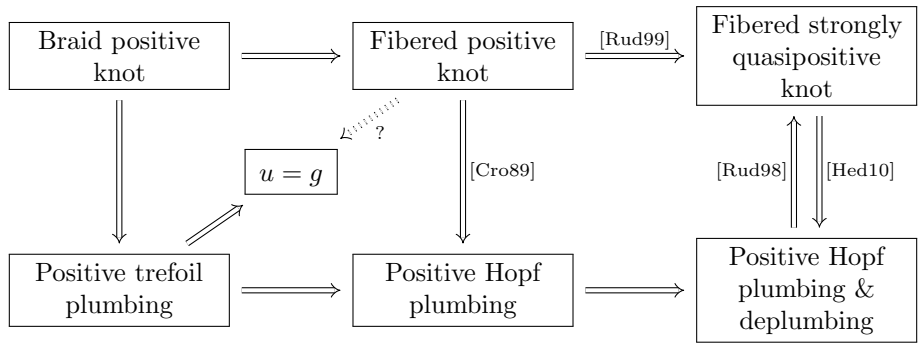%
\begin{theorem}
\label{thm:unknotting}
Let $K$ be the fibered positive knot $K11n183$ shown in \cref{fig:11n183},
which satisfies $u(K) = g(K) = 3$.
Then there is no crossing change relating $K$ with a fibered positive knot $J$ with $u(J) = 2$.
\end{theorem}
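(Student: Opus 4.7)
The plan combines a finiteness argument with crossing-change obstructions from classical invariants. Since every fibered positive knot is strongly quasipositive by \cite{Rud_Pos_is_SQP}, Rudolph's inequality $u \geq g$ forces any candidate $J$ to satisfy $g(J) \leq u(J) = 2$. The unknot is excluded by $u = 0 \neq 2$, and the right-handed trefoil $3_1$ (the unique fibered positive knot of genus one) is excluded by $u(3_1) = 1 \neq 2$. The remaining candidates are the fibered positive knots of genus exactly~$2$ with $u = 2$; I would enumerate these explicitly, using tables of low-crossing fibered positive knots (or equivalently the fibered-homogeneous characterization coming from \cite{Cromwell_homogeneous}), to obtain a short finite list whose principal element is the torus knot $5_1 = T(2,5)$.

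For each remaining $J$, I would then rule out the existence of a single crossing change relating $J$ and $K = 11n183$. The first line of attack uses classical obstructions: the signature inequality $|\sigma(K) - \sigma(J)| \leq 2$; the parallel bounds on Rasmussen's $s$-invariant and Ozsv\'ath--Szab\'o's $\tau$-invariant (both of which equal $g$ for fibered positive knots); and the Alexander polynomial skein relation, which requires $\Delta_K - \Delta_J = (t^{1/2} - t^{-1/2})\,\Delta_L$ for some two-component link $L$ and hence constrains $\Delta_K - \Delta_J$ to have a fixed divisibility pattern. For the tightest candidates, in particular $5_1$, I would appeal to the Montesinos trick: a crossing change realises the branched double cover $\Sigma_2(K)$ as half-integer surgery on a knot in $\Sigma_2(J)$, which then constrains the Heegaard Floer correction terms $d(\Sigma_2(K))$ and $d(\Sigma_2(J))$ via the Ozsv\'ath--Szab\'o surgery inequalities.

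The main obstacle is expected to be the $5_1$ case, whose classical invariants differ from those of $K$ only within the range allowed by a single crossing change, so that signature, $\tau$, and $s$ alone will not suffice. There I would compute $d(\Sigma_2(K))$ explicitly (using a positive braid or surgery description of $K$ to get a handle on the small Seifert fibered space $\Sigma_2(K)$) and compare it with $d(L(5,1)) = d(\Sigma_2(T(2,5)))$, then apply a surgery obstruction of Ni--Wu / Owens--Strle type to the induced half-integer surgery. A secondary technical point is to treat both signs of the crossing change uniformly: typically one sign is ruled out by the signature or by $\tau$, while the other requires the branched double cover $d$-invariant argument to finish the proof.
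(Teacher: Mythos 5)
Your overall architecture matches the paper's at the top level (use $g(J)\le u(J)=2$ to reduce to a finite list of candidate knots $J$, then obstruct a single crossing change to each), but two genuine gaps remain. First, the enumeration step: consulting ``tables of low-crossing fibered positive knots'' only yields a complete list if you first bound the crossing number of $J$. The paper gets this from Cromwell's bound $c(J)\le 4g(J)\le 8$ for fibered positive knots (\cite[Corollary~5.1]{Cromwell_homogeneous}); fiberedness is essential here, since positive twist knots already show that positive knots of fixed small genus have unbounded crossing number. You cite Cromwell only for a fibered-homogeneous characterization, not for this crossing-number bound, so the finiteness of your search is unjustified as stated. Second, the list itself: besides $T(2,5)$ it must contain the composite knot $T(2,3)^{\# 2}$, which does not appear in prime-knot tables and which your ``short finite list whose principal element is $T(2,5)$'' never names.

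This matters because the $T(2,3)^{\# 2}$ case, not the $T(2,5)$ case, is where all of your proposed tools fail. For $T(2,5)$ no $d$-invariant machinery is needed: Wendt's classical bound applied to $H_1(\Sigma_3(\cdot);\mathbb{F}_2)$, which is $\mathbb{F}_2^4$ for $K11n183$ and $0$ for $T(2,5)$, already gives Gordian distance at least $2$, as the paper remarks. For $T(2,3)^{\# 2}$, however, signature, $\tau$, $s$, and the skein constraint on the Alexander polynomial are all compatible with a single positive-to-negative crossing change from $K11n183$ (only the wrong sign is excluded by $g_4$-type bounds), and mod-$p$ homology of branched covers does not separate the two knots sufficiently. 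The paper has to introduce a refined obstruction (\cref{prop:pid}) that distinguishes $R_3/(4)$-summands from $R_3/(2)$-summands in $H_1(\Sigma_3(\cdot);R_3)$ — here $H_1(\Sigma_3(K11n183);R_3)\cong (R_3/(4))^{\oplus 2}$ while $H_1(\Sigma_3(T(2,3)^{\# 2});R_3)\cong (R_3/(2))^{\oplus 2}$ — and explicitly states that no alternative method for this pair is known to the authors. Your Montesinos-trick/$d$-invariant plan might conceivably handle it, but it is only a plan: no computation of the correction terms of $\Sigma_2(K11n183)$ or of the relevant half-integer surgery obstruction is carried out, and you direct that effort at $T(2,5)$ rather than at the case that actually needs it. As it stands, the decisive case is left open.
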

\begin{theorem}
\label{thm:plumbing}
The fiber surface of $K11n183$ is not a positive trefoil plumbing.
\end{theorem}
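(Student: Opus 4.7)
The plan is to obstruct the existence of such a plumbing by computing invariants that are multiplicative, respectively additive, under Murasugi sum. By Gabai's theorem on the monodromy of a Murasugi sum, the plumbing of two fibered surfaces is itself fibered, with monodromy equal to the composition of the individual monodromies extended by the identity on the complementary piece; hence the Alexander polynomial is multiplicative under plumbing of fiber surfaces. A positive trefoil plumbing of genus $g$ would therefore satisfy $\Delta(t) = (t^2 - t + 1)^g$, so if the fiber surface of $K11n183$ were such a plumbing then, since $g(K11n183) = 3$, we would have
\[
\Delta_{K11n183}(t) = (t^2 - t + 1)^3 = 1 - 3t + 6t^2 - 7t^3 + 6t^4 - 3t^5 + t^6.
\]
The theorem then follows immediately from a knot table lookup if the actual Alexander polynomial of $K11n183$ disagrees with this prediction.

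Should the Alexander polynomials happen to coincide, a finer obstruction is the signature, which is also additive under Murasugi sum of Seifert surfaces: a positive trefoil plumbing of genus $3$ would have signature $-6$. Failing that, one turns to the Seifert form itself. A plumbing of $g$ positive trefoils admits a Seifert matrix in a very restricted shape, namely block upper-triangular with the $2 \times 2$ trefoil block $\left(\begin{smallmatrix}-1 & 1 \\ 0 & -1\end{smallmatrix}\right)$ on each diagonal entry. Showing that the Seifert form of $K11n183$ is not $S$-equivalent to any such matrix is a substantially stronger obstruction than matching characteristic polynomial and signature, and may be tested by comparing lattice invariants (the linking form on the discriminant, its decomposability, etc.).

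The main obstacle is precisely the strength of these classical invariants: $K11n183$ has presumably been selected because coarse numerical obstructions do not on their own rule out a trefoil plumbing. In the worst case the argument must be carried out directly in the mapping class group of the fiber, excluding every triple of pairwise disjoint essential once-punctured tori in the fiber surface $\Sigma$ on which the restricted monodromy is conjugate to a positive trefoil monodromy. Making such an enumeration tractable—likely via an explicit factorization of the monodromy of $K11n183$ into positive Dehn twists and a computer-assisted search over the finitely many subsurface configurations compatible with positivity—is plausibly the delicate part of the proof.
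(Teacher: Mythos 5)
Your key step is false: the Alexander polynomial is \emph{not} multiplicative under plumbing, and the signature is not additive. Gabai's theorem does give that the monodromy of the plumbing is the composition of the two extended monodromies, but on first homology these extensions are block-triangular in \emph{opposite} directions, so the characteristic polynomial of their composition is not the product of the characteristic polynomials. Equivalently, the Seifert matrix of a plumbing is only block triangular, say $A=\left(\begin{smallmatrix} A_1 & 0 \\ C & A_2 \end{smallmatrix}\right)$, and $\det(tA-A^{\top})$ contains both $tC$ and $-C^{\top}$, so it genuinely depends on the gluing data $C$ (the homology class of the plumbing arc). \cref{prop:twotrefoils} makes this explicit: a plumbing of two positive trefoils has Alexander polynomial $(t^{-1}-1+t)^2+d\,(t^{-1}-2+t)$, and for $d=1$ this is $T(2,5)$, whose Alexander polynomial is not $(t^2-t+1)^2$. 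Hence a plumbing of three positive trefoils need not have Alexander polynomial $(t^2-t+1)^3$, and your ``immediate table lookup'' proves nothing. The signature fallback fails too: $K10n14$ is a plumbing of two positive trefoils ($d=3$) with determinant $3$, so $\sigma\equiv 2\pmod 4$ and $\sigma\neq -4$. Your remaining observation --- that a plumbing of positive trefoils has a Seifert matrix that is block triangular with trefoil blocks on the diagonal --- is correct, and excluding S-equivalence of the Seifert form of $K11n183$ to every such matrix would indeed suffice; but you give no method to handle the infinitely many possible off-diagonal blocks, and that is exactly where the content of a proof must lie. The mapping-class-group enumeration you sketch as a last resort is likewise only a plan, not an argument.

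For comparison, the paper's proof runs as follows: by genus additivity, a putative presentation of $K11n183$ would use exactly three trefoils; deplumbing one trefoil is realized by a positive-to-negative crossing change, so $K11n183$ would be one crossing change away from a plumbing of two positive trefoils. \cref{prop:twotrefoils} classifies those plumbings by the pair $(a,b)$ encoding the plumbing arc: they are $T(2,3)^{\#2}$, $T(2,5)$, $K10n14$, or have $\sigma_{e^{0.11\pi i}}=-2$. The first three are excluded as neighbors of $K11n183$ by the Gordian-distance bound of \cref{prop:pid} applied to $H_1(\Sigma_3(\,\cdot\,);R_3)$ (via \cref{thm:unknotting} for the first two), and the last by monotonicity of Levine--Tristram signatures under positive-to-negative crossing changes, since $\sigma_{e^{0.11\pi i}}(K11n183)=0$. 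Any rescue of your approach would have to incorporate the off-diagonal plumbing data in this way rather than treating the plumbing as a connected sum.
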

\begin{theorem}
\label{thm:counterexample}
Let $D$ be the positive knot diagram shown in \cref{fig:counterexample}.
Then $D$ represents a fibered positive knot $\K$ of genus $7$.
However, the least number of crossings of $D$
one needs to change to produce a diagram of the unknot equals $9$.
\end{theorem}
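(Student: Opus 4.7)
The theorem splits into four assertions: positivity of $D$, $g(\K)=7$, fiberedness of $\K$, and the diagrammatic statement that the minimal number of crossing changes in $D$ producing an unknot diagram equals $9$. Positivity is visible from \cref{fig:counterexample}. For the genus, I would apply Seifert's algorithm to $D$, obtaining a Seifert surface $F$ of genus $(c-s+1)/2$, with $c$ and $s$ the numbers of crossings and Seifert circles of $D$; read off from the figure this should equal $7$. Since $D$ is positive, the Bennequin--Cromwell inequality guarantees that $F$ realises the Seifert genus, hence $g(\K)=7$. For fiberedness, the quickest route is to compute the Alexander polynomial $\Delta_\K(t)$ from $D$ and verify that it is monic of degree $14 = 2g(\K)$: monicity together with the degree condition characterises fiberedness of $\K$. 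Alternatively, one may try to exhibit $F$ explicitly as obtained from a disk by successive Hopf plumbings and invoke Stallings' theorem.

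The diagrammatic unknotting claim has two halves. The upper bound of $9$ is established by exhibiting nine specific crossings in $D$ whose simultaneous change yields a diagram that simplifies to the trivial diagram by Reidemeister moves; such a set is presumably already marked in \cref{fig:counterexample} or easily extracted from it. The heart of the theorem is the lower bound, namely that no $8$-subset of crossings suffices. The classical lower bounds on the unknotting number --- $|\sigma(\K)|/2$, the smooth slice genus $g_4(\K)$, the Ozsv\'ath--Szab\'o $\tau$-invariant, Rasmussen's $s$-invariant, and their relatives --- are all at most $g(\K)=7$ for the positive knot $\K$, hence none of them can prove the bound $\geq 9$.

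The natural, and likely the only, strategy is therefore a finite computer-assisted enumeration: run through all $\binom{c}{8}$ subsets of crossings of $D$ and for each verify that the resulting diagram does not represent the unknot, for instance by computing the Jones polynomial, determinant, or another easily checked invariant. This is the main obstacle: one must make the check computationally tractable and rigorously certified. The workload can be reduced by exploiting the symmetries of $D$ visible in \cref{fig:counterexample}, and by first filtering with cheap invariants (noting, for example, that $\det=1$ is necessary for the unknot) before resorting to heavier invariants on the surviving candidates. Once the enumeration terminates without producing an $8$-subset whose change unknots $D$, both halves combine to give the claimed equality.
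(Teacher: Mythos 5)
Your overall strategy coincides with the paper's: the genus is computed from Seifert's algorithm applied to the positive diagram (positivity guaranteeing minimality), the upper bound comes from an explicit set of nine crossings, and the heart of the theorem is a finite computer enumeration; the paper does exactly this, checking that the Alexander polynomial remains non-trivial after every combination of at most $8$ crossing changes in $D$, and it obtains fiberedness (and the genus) by exhibiting the Seifert surface as a plumbing of $14$ positive Hopf bands --- the case $n=1$ of \cref{prop:counterexample_general}, see \cref{fig:counterexampletangle} --- so that fiberedness follows from Stallings' plumbing theorem~\cite{Stallings78}.

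Two points in your write-up need repair. First, your primary route to fiberedness is not valid as stated: a monic Alexander polynomial of degree $2g$ is necessary but not sufficient for fiberedness in general (a suitably twisted double of the trefoil has genus $1$ and monic Alexander polynomial of degree $2$, yet is not fibered, since the only genus-one fibered knots are the trefoils and the figure-eight). To argue via the Alexander polynomial you would have to invoke a statement special to the class at hand, namely that for homogeneous (in particular positive) links monicity does imply fiberedness, which follows from Cromwell's analysis of homogeneous diagrams~\cite{Cromwell_homogeneous}; otherwise use your alternative route via an explicit Hopf-plumbing description, which is what the paper actually does. Second, your enumeration runs only over the $\binom{c}{8}$ subsets of size exactly $8$; since success with $k<8$ crossing changes is not implied by failure of every $8$-element subset (changing an extra crossing in an unknot diagram can reknot it), you must check all subsets of size at most $8$, as the paper does. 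With these corrections (and your sensible filtering by cheap invariants such as the determinant before heavier ones), your argument matches the paper's proof.
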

In fact, we make the following conjecture.
\begin{conjecture}\label{conj:counterexample}
    The fibered positive knot $\K$ shown in \cref{fig:counterexample} has unknotting number $9$ and is thus a counterexample to \cref{conj:pos_fib}. 
\end{conjecture}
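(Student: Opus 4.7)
The plan is to match the upper bound from Theorem~\ref{thm:counterexample}, which gives $u(\K)\le 9$ via an explicit nine-crossing-change unknotting of $D$, with a topological lower bound $u(\K)\ge 9$. All the difficulty lies in the lower bound, and note that already $u(\K)\ge 8$ would disprove Conjecture~\ref{conj:pos_fib}, since $g(\K)=7$; so a reasonable staged goal is to first establish $u(\K)\ge 8$ and then push to $9$.

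Classical concordance-genus obstructions cannot suffice here: being fibered, positive, and hence strongly quasipositive, $\K$ satisfies $\tau(\K)=\nu^+(\K)=g(\K)=7$ and $|s(\K)|/2=7$, and the signature of a positive genus-seven knot is unlikely to exceed this. A successful lower bound must therefore either detect torsion in a categorified theory, or exploit finer topology than the concordance genus. My preferred line of attack, consistent with the paper's keyword ``branched coverings'', is via the Lickorish--Montesinos trick: if $u(\K)\le n$, then the double branched cover $\Sigma_2(\K)$ arises as half-integer surgery on an $n$-component link in $S^3$, which places strong constraints on the linking form of $H_1(\Sigma_2(\K);\Z)$ and on the Heegaard Floer correction terms of $\Sigma_2(\K)$. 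Concretely, I would compute a Goeritz matrix of $D$ to obtain the linking form, compute the $d$-invariants of $\Sigma_2(\K)$ from a surgery description (via the mapping cone formula or numerically), and test whether they are compatible with any $n$-component half-integer surgery for $n\le 8$, in the spirit of Ozsv\'ath--Szab\'o, Owens--Strle, and Jabuka--Naik. In parallel I would compute Khovanov, Bar--Natan, and knot Floer homology of $\K$ and apply the torsion-order lower bounds on $u$ due to Alishahi and Juh\'asz--Zemke, which for some knots can strictly exceed the genus.

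The hard part will be making any such obstruction sharp enough. The gap $u-g=2$ is small, and most standard invariants saturate at or below the Seifert genus for fibered strongly quasipositive knots, which is precisely why Conjecture~\ref{conj:pos_fib} has been resilient. I therefore expect that no single standard invariant will be sufficient, and that a proof will require either a clever combination of the obstructions above or genuinely new input, for example Casson--Gordon or twisted Alexander invariants of $\Sigma_2(\K)$. Even the staged target $u(\K)\ge 8$ via $d$-invariants is nontrivial, since one must rule out all $8$-component half-integer surgery descriptions of $\Sigma_2(\K)$; pushing from $8$ to the full $u(\K)\ge 9$ is likely to require further, more delicate arithmetic input from the linking form together with explicit enumeration of potential surgery cobordisms.
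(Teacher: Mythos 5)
This statement is a \emph{conjecture}, not a theorem: the paper does not prove it, and explicitly states (see the discussion after \cref{conj:counterexample} and \cref{rem:types}) that no known lower bound for the unknotting number appears strong enough to settle it. What the paper does prove is only \cref{thm:counterexample} (the diagram $D$ cannot be unknotted with fewer than $9$ crossing changes \emph{within $D$}, verified by exhaustive computation) together with $u(\K)\leq 9$. Your proposal is likewise not a proof but a research plan, and you acknowledge as much; so the honest verdict is that there is a genuine gap --- the key lower bound $u(\K)\geq 8$ is never established, by you or by the paper.

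Beyond that, several concrete components of your plan are ruled out by the paper's own computations in \cref{prop:g4uauqK} and \cref{rem:types}. Any obstruction factoring through the smooth $4$-genus, the algebraic unknotting number, or the proper rational unknotting number is useless here, since $g_4(\K)=7$, $u_a(\K)\leq 7$ and $u_q(\K)\leq 5$. This hits your Montesinos-trick strategy directly: the paper shows (via \cref{fig:untangling_tangle,fig:rational_unknotting}) that $\Sigma_2(\K)$ genuinely is surgery on a link with at most $5$ components, so obstructions built from the number of generators of $H_1(\Sigma_2(\K);\Z)$, the linking form, or generic surgery-description constraints cannot reach $8$; the refined half-integer-surgery $d$-invariant obstructions in the literature are either of the ``at most two'' type or, like Owens' theorem cited in the paper, give at best $u(\K)>|\sigma(\K)/2|=5$. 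Similarly, the torsion-order bounds from knot Floer and Bar--Natan homology are bounded above by genus-type quantities and so cannot exceed $7$ for this knot. Your closing caveat --- that genuinely new input would likely be needed --- is exactly the paper's position, but a plan whose enumerated tools are demonstrably insufficient, with the decisive step deferred, cannot be accepted as a proof of the statement.
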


In \cref{sec:counterexample}, we shall see that, however, no known lower bound for the unknotting number appears to be strong enough to settle \cref{conj:counterexample}.
We also present an infinite family of potential counterexamples for \cref{conj:pos_fib},
generalizing the knot~$\K$.

\subsection*{Acknowledgments} 
This work started at the \emph{Geometry and Topology Section} of the DMV (\emph{German Mathematical Society}) Annual Meeting 2022 in Berlin. 
M.K.\ is partially supported by the SFB/TRR 191 \textit{Symplectic Structures in Geometry, Algebra and Dynamics}, funded by the DFG (Projektnummer 281071066 - TRR 191).
L.L.~gratefully acknowledges support by the DFG (\emph{German Research Foundation}) via the Emmy Noether Programme, project no. 412851057.
N.M.\ is funded by the DFG under Germany's Excellence Strategy -- The Berlin Mathematics Research Center MATH+ (EXC-2046/1, project ID: 390685689).
M.S.\ was partially supported by Spanish Research Project PID2020-117971GB-C21, by IJC2019-040519-I, funded by MCIN/AEI/10.13039/501100011033 and by SI3/PJI/2021-00505 (Com. Madrid).
The authors wish to thank Sebastian Baader and Peter Feller for stimulating conversations.

\section{Unknotting of positive braids}\label{sec:pos_braids}

This section is devoted to the proof of \cref{thm:unknotting_sequence}, stating that knots given by a positive braid diagram may be optimally unknotted within that diagram. An example is shown in \cref{fig:torusunknotting} from which the general argument can be deduced easily. 
\begin{figure}[b]
\includegraphics{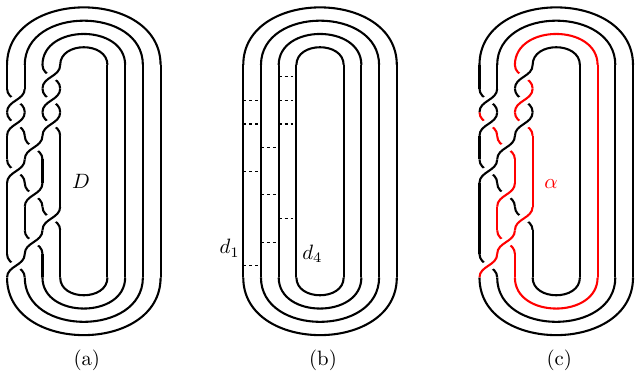}
\caption{(a) Example of a diagram $D$ representing a braid positive knot $K$. (b) Seifert circles obtained when applying Seifert's algorithm to $D$. (c) A sub-arc $\alpha\subset D$ with a single double point.} \label{fig:braid}
\end{figure}

\begin{proof}[Proof of \cref{thm:unknotting_sequence}]
Let $K$ be a braid positive knot and $D$ a positive braid diagram representing $K$; see \cref{fig:braid}(a) for an example. We may think of $D$ as an oriented immersed self-transverse circle in $\R^2$. The positivity of the crossings is then enough to determine the over-/undercrossing information at each double point. Note that the Seifert circles $d_1,d_2,\ldots,d_n$ that arise when Seifert's algorithm is applied to $D$ are nested, as in \cref{fig:braid}(b). We choose the numbering in such a way that the disk bounded by $d_i$ contains $d_{i+1}$, for all $i\in\{1,2,\ldots, n-1\}$. In particular, $d_1$ is the outermost and $d_n$ is the innermost Seifert circle. Note that $d_i\cap D$ consists of finitely many sub-arcs of $d_i$, separated by the crossings that connect $d_i$ with the adjacent Seifert circles.

We proceed by induction on the number of crossings of $D$. If $D$ has no crossings, or, more generally, if $K$ is the unknot, we are done. Hence, assume that $K$ is non-trivial. Let $\alpha\subset D$ be an arc such that $\alpha\cap d_1$ has exactly two connected components, each containing one of the endpoints of $\alpha$. In other words, $\alpha$ is the projection of an arc in $K$, starts at a point of $d_1$, travels along $D$ in the given orientation, and stops the first time it comes back to $d_1$. An example of such an arc is shown in \cref{fig:braid}(c). Since $D$ is a positive braid diagram, every passage of $\alpha$ from a Seifert circle $d_i$ to $d_{i+1}$ through a double point is an overpass and every passage of $\alpha$ from $d_{i+1}$ to $d_i$ is an underpass. This implies that $\alpha$ contains the same number $\ell\geq 1$ of over- and underpasses. Note that there does indeed exist such a path $\alpha$ because $K$ is non-trivial; in particular, there are at least two Seifert circles and at least one crossing connecting $d_1$ to $d_2$.

First, consider the case in which every double point of $D$ is traversed at most once by $\alpha$. Changing the crossings that correspond to the $\ell$ underpasses on $\alpha$, turns $\alpha$ into an arc that only contains overpasses. A planar isotopy can then be applied to remove all $2\ell$ crossings encountered by $\alpha$, preserving the rest of the diagram. We thus obtain a new positive braid diagram $D'$ with $2\ell$ fewer crossings than $D$. The knot $K'$ represented by $D'$ differs from $K$ by $\ell$ crossing changes. When applying Seifert's algorithm to $D'$, we still obtain the same number $n$ of Seifert circles; therefore $g(K')+\ell = g(K)$. By induction, we can find $g(K')$ crossing changes within $D'$ which unknot $K'$. These $g(K')$ crossings did not move under the isotopy from $D$ to~$D'$, so we find them in the original diagram, $D$. Together with the $\ell$ crossings at the underpasses of $\alpha$, we obtain $g(K')+\ell = g(K)$ crossings in $D$ such that changing these turns $D$ into a diagram of the unknot, as claimed. 
\begin{figure}[b]
\centering
\includegraphics[width=\textwidth]{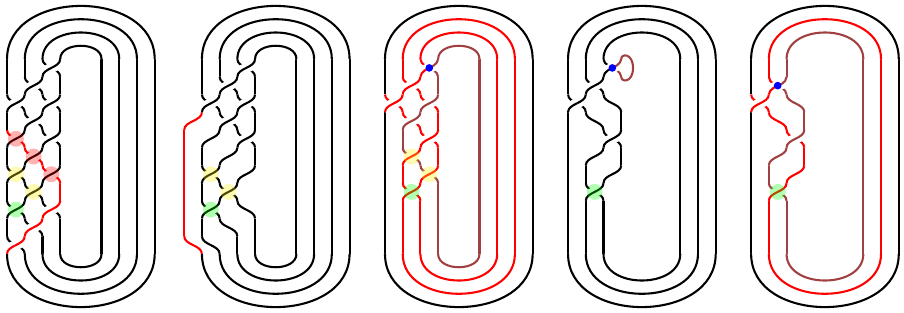}
\caption{The algorithm from the proof of \cref{thm:unknotting_sequence}, applied to the torus knot $T(4,5)$. Crossings to be changed are highlighted in red, yellow, and green (they are changed in this order); arcs $\alpha$ in red, loops $\alpha_0$ in dark red, double points of loops $\alpha_0$ marked with a blue dot.} 
\label{fig:torusunknotting}
\end{figure}

Now consider the case in which $\alpha$ has at least one double point. We replace $\alpha$ by a sub-arc 
$\alpha_0\subset\alpha$ that starts and ends at a double point $c$, that is, $\alpha_0$ shall not self-intersect, whilst it may very well pass through crossings of $D$. Such a loop can easily be found by walking along $\alpha$ until the first self-crossing of that walk occurs. Now we proceed as in the previous case: perform $\ell_0\geq 0$ crossing changes in $D$, transforming the underpasses in $\alpha_0$ (away from $c$, if there are any) into overpasses, apply a planar isotopy from $\alpha_0$ to a small loop that does not contain any double points of $D$ and finally perform a type~(I) Reidemeister move removing the crossing~$c$. This produces a positive braid diagram $D'$ of a knot $K'$, having $2\ell_0+1$ fewer crossings than $D$, and $K'$ is related to $K$ by $\ell_0$ crossing changes. The number of Seifert circles in $D'$ is $n-1$, and therefore $g(K')+\ell_0=g(K)$. Again by induction, we can find $g(K')$ crossing changes in $D'$ turning $K'$ into the unknot; these $g(K')$ crossings did not move under the isotopy from $D$ to $D'$, so we find them within the original diagram $D$. Together with the $\ell_0$ crossings at the underpasses of $\alpha_0$, we found a set of $g(K')+\ell_0=g(K)$ crossings in $D$ such that changing them turns $D$ into a diagram of the unknot. 
\end{proof}

\section{Obstructing the unknotting of fibered positive knots along fibered positive knots}\label{sec:unknotting_within}

\begin{figure}[b] 
 	\centering
 	\includegraphics[width=\textwidth]{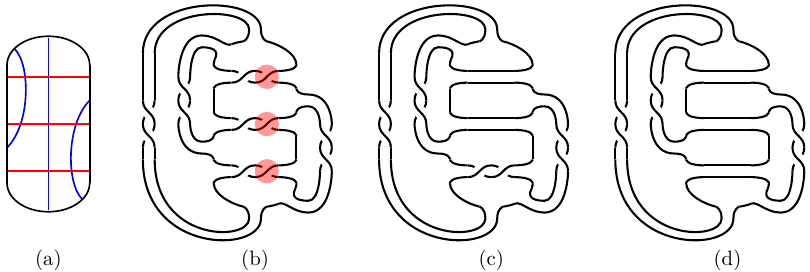}
 	\caption{(a) Disc with prescribed arcs for plumbing positive Hopf bands from the top, in red, and from below, in blue. (b) Resulting positive diagram of $K11n183$. (c) The result of changing the top two of the three marked crossings is the twist knot $K6a3$, which is not fibered. (d) Diagram of the unknot obtained by changing all three marked crossings.}
  \label{fig:11n183} 
\end{figure}

This section contains the proof of~\cref{thm:unknotting}, stating that there is no crossing change from the (positive, fibered) knot $K11n183$ with unknotting number~$3$ to any fibered positive knot with unknotting number~$2$.
\cref{fig:11n183}(a) and (b) show a description of the protagonist of this section, the knot $K11n183$, as a plumbing of positive Hopf bands. \cref{fig:11n183}(b)--(d) moreover indicate how to unknot $K11n183$ with 3 crossing changes.

The Gordian distance of two knots is the minimum number of crossing changes required to transform one knot into the other. In the proof of \cref{thm:unknotting} we make use of a new lower bound for Gordian distances in terms of the homology of branched coverings, which we give in \cref{prop:pid}.
As preparation, let us make two definitions and prove one lemma in a rather general algebraic context.
\begin{definition}\label{def:n}
Let $R$ be a PID, $q\in R$ a prime element and $e \geq 1$ an integer.
For a matrix $A$ over~$R$, denote by $n_{q,e}(A)$ the maximal integer
such that the cokernel of~$A$ admits a summand $R/(q^{f_1}) \oplus \dots \oplus R/(q^{f_{n_{q,e}(A)}})$
with~$f_1, \dots, f_{n_{q,e}(A)} \geq e$.
\end{definition}
\begin{definition}\label{def:rank}
The \emph{rank} $\rk_S A$
of an $m\times m$ matrix $A$ over a commutative ring~$S$
is the smallest number of generators for the submodule of $S^m$ generated
by the columns of $A$.
\end{definition}
\begin{lemma}\label{lemma:algebra}
Let $A$ be an $m\times m$ matrix over a PID $R$, $q\in R$ a prime element and $e \geq 1$ an integer.
Let $S$ be the ring $R / (q^e)$. Then $n_{q,e}(A) = m - \rk_S A$.
\end{lemma}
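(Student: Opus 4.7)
My plan is to diagonalize $A$ via Smith normal form over $R$ and then read off both $n_{q,e}(A)$ and $\rk_S A$ directly from the elementary divisors.

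As a preliminary step, I would check that both quantities are invariant under $A \mapsto UAV$ for $U, V \in \GL_m(R)$. For $n_{q,e}$ this is immediate from the invariance of $\cok A$. For $\rk_S A$, the reductions $\bar U, \bar V$ lie in $\GL_m(S)$; right multiplication by $\bar V$ only reshuffles columns, while left multiplication by $\bar U$ transforms the column submodule of $S^m$ by an $S$-module automorphism. Either operation preserves the minimum number of generators, so by Smith normal form I may assume $A = \mathrm{diag}(d_1, \dots, d_m)$.

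In the diagonal case, $\cok A \cong \bigoplus_i R/(d_i)$, and the $q$-primary summand of each $R/(d_i)$ is $R/(q^{v_q(d_i)})$, where $v_q$ denotes the $q$-adic valuation. Consequently,
\[
    n_{q,e}(A) = \#\{i : v_q(d_i) \geq e\}.
\]
To compute $\rk_S A$, I would invoke Nakayama's lemma: since $S = R/(q^e)$ is local with maximal ideal $(q)$ and residue field $k = R/(q)$, the minimum number of generators of a finitely generated $S$-module $M$ equals $\dim_k M/qM$. Applied to $M = \im \bar A = \bigoplus_i \bar d_i S \cdot e_i \subseteq S^m$, the $i$-th summand contributes $1$ to $\dim_k M/qM$ precisely when $\bar d_i \neq 0$, that is, when $v_q(d_i) < e$. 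Hence $\rk_S A = \#\{i : v_q(d_i) < e\}$, and subtracting from $m$ yields $n_{q,e}(A)$.

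The only real subtlety, and presumably the point of formulating this lemma at all, is that $S$ is not a domain, so the notion of rank from \cref{def:rank} is not the familiar one from linear algebra over a field. The Nakayama reformulation converts it into a dimension count over the residue field $k$, after which the Smith normal form reduction finishes the job as a routine bookkeeping exercise.
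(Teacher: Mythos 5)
Your argument is correct and shares the paper's skeleton: check that both $n_{q,e}$ and $\rk_S$ are unchanged under multiplication by matrices invertible over $R$, pass to the Smith normal form, and read everything off the elementary divisors $d_1,\dots,d_m$. The difference lies in how the crucial lower bound on the number of generators of the column module is obtained. The paper does it by hand: given $a$ generators of $S/(q^{e-f_1})\oplus\dots\oplus S/(q^{e-f_\ell})$, it multiplies the $i$-th component by $q^{e-f_i-1}$ to produce a surjection $S^a\to (S/(q))^{\oplus\ell}$ and concludes $a\geq\ell$ because $S/(q)$ is a field --- in effect an ad hoc proof of precisely the instance of Nakayama you invoke. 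You instead observe that $S=R/(q^e)$ is local with maximal ideal $(q)$ and residue field $k=R/(q)$, so the minimal number of generators of $M=\im\bar A$ equals $\dim_k M/qM$, which you compute summandwise; this is shorter and arguably more conceptual, at the price of citing a standard result where the paper stays self-contained. Two minor points: ``right multiplication by $\bar V$ only reshuffles columns'' is loose (it performs invertible column operations), but the fact you actually need --- that the submodule of $S^m$ generated by the columns is unchanged --- does hold since $\bar V$ is invertible; and your identification $n_{q,e}(A)=\#\{i\colon v_q(d_i)\geq e\}$, exactly like the paper's ``number of $d_i$ divisible by $q^e$,'' tacitly counts zero diagonal entries, which matches \cref{def:n} only for nonsingular $A$ --- harmless here, since the lemma is applied to presentation matrices of finite modules, and the paper's own proof makes the same identification.
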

\begin{proof}
We can simplify the situation by applying the Smith normal form. Because $R$ is a~PID, there are invertible matrices $U$ and $V$ such that the product $UAV$ is a diagonal matrix with entries $d_1, \ldots, d_m$, where each $d_i$ divides~$d_{i+1}$. It is important to note that the rank and the isomorphism type of the cokernel of $A$ remain unchanged when we multiply by $U$ and~$V$,
i.e.~$\rk_S A  = \rk_S UAV$ and $n_{q,e}(A) = n_{q,e}(UAV)$.
Hence $n_{q,e}(A)$ equals the number of $d_i$ divisible by~$q^e$. When we consider $UAV$ as a matrix over the ring~$S$, these entries $d_i$ become~$0$. This proves $\rk_S A \leq m - n_{q,e}(A)$.

Let us now show the reverse inequality. Let $\ell = m - n_{q,e}(A)$ and
for $i \in \{1,\ldots,\ell\}$, let $f_i$ be maximal such that $q^{f_i}$ divides $d_i$.
By assumption, we have ${0\leq f_i < e}$.
Note that the $S$-module generated by the columns of $UAV$ is isomorphic to
$S / (q^{e - f_1}) \oplus \dots \oplus S / (q^{e - f_{\ell}})$.
Let a system of $a$ generators of this module be given, i.e.~a surjective $S$-homomorphism
\[
S^a \to S / (q^{e - f_1}) \oplus \dots \oplus S / (q^{e - f_{\ell}}).
\]
Composing with multiplication by $q^{e - f_i - 1}$ in the $i$-th component,
this yields a surjective homomorphism
$S^a \to (S / (q))^{\oplus \ell}$.
Since the target module is annihilated by~$q$, this induces a surjective
$S/(q)$-homomorphism $(S/(q))^a \to (S / (q))^{\oplus \ell}$.
But since $S/(q) \cong R_p/(q)$ is a field, we have $a\geq \ell$.
Thus we have established that any system of generators of the column space of $UAV$ over $S$ has at least $\ell$ 
generators, and so the desired inequality $\rk_S A\geq \ell = m - n_{q,e}(A)$ follows.
\end{proof}

We shall now apply our newly found algebraic wisdom to the homology of branched coverings.
Namely, for a knot $K$ and an integer $n \geq 2$, denote by $\Sigma_n(K)$ the $n$-fold branched covering of $S^3$ along $K$ (see e.g.~\cite[Section~8.E]{MR3156509} for an introduction). 
The action of the deck transformation group $\langle t \,|\, t^n\rangle$ on $\Sigma_n(K)$
makes the homology groups of $\Sigma_n(K)$ into modules over the ring
$\Z[t]/(t^n-1)$. Since $(1 + \dots + t^{n-1})$ acts trivially on homology,
one may also consider homology as $R_n$-modules, for $R_n \coloneqq \Z[t]/(1 + \dots + t^{n-1})$. To compute $H_1(\Sigma_n(K); R_n)$, we will use the following lemma.

\begin{lemma}[{\cite[Proposition~8.39]{MR3156509}}]\label{lem:presmatrix}
If $B$ is a Seifert matrix of a knot $K$, then $tB - B^{\top}$ is a presentation matrix 
of $H_1(\Sigma_n(K); R_n)$ over~$R_n$.
\end{lemma}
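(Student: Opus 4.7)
The plan is to deduce the lemma from the classical presentation of the Alexander module. Write $\Lambda = \Z[t,t^{-1}]$, let $X_\infty$ denote the infinite cyclic cover of $S^3 \setminus K$, and set $\cA = H_1(X_\infty; \Z)$. The first step is to recall the standard fact that $\cA$ admits $tB - B^{\top}$ as a presentation matrix over $\Lambda$. This is proved by a Mayer--Vietoris argument on the decomposition of $X_\infty$ obtained by cutting $S^3 \setminus K$ open along a Seifert surface $F$ with Seifert matrix $B$ and gluing $\Z$ translates cyclically; the two push-off inclusions $F^{\pm} \hookrightarrow S^3 \setminus \nu(F)$, scaled respectively by $t$ and by $1$, produce the presentation map represented by $tB - B^{\top}$.

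Next, I would pass from $\cA$ to the homology of the branched cover. The $n$-fold unbranched cover $X_n$ equals $X_\infty / \langle t^n \rangle$, so the Wang-type sequence associated to this $\Z$-cover contains
\[
H_1(X_\infty) \xrightarrow{t^n - 1} H_1(X_\infty) \longrightarrow H_1(X_n) \longrightarrow H_0(X_\infty) \xrightarrow{t^n - 1} H_0(X_\infty).
\]
Since $t$ acts trivially on $H_0(X_\infty) = \Z$, this yields a short exact sequence $0 \to \cA/(t^n - 1)\cA \to H_1(X_n; \Z) \to \Z \to 0$, with the right-hand $\Z$ generated by a lift of a meridian of $K$. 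Filling in the branch locus to pass from $X_n$ to $\Sigma_n(K)$ attaches a solid torus whose meridian disk kills precisely that class, and so $H_1(\Sigma_n(K); \Z) \cong \cA / (t^n - 1)\cA$ as $\Z[\Z/n]$-modules.

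Finally, I would upgrade this identification to an $R_n$-module statement. The matrix $B - B^{\top}$ represents the (unimodular) intersection form on the connected Seifert surface $F$, so $\det(B - B^{\top}) = \pm 1$, meaning that $(t-1)$ acts invertibly on $\cA$, and hence also on $\cA / (t^n - 1)\cA$. Combined with the identity $(t-1)(1 + t + \cdots + t^{n-1}) = t^n - 1$, which vanishes on the quotient, this forces $(1 + t + \cdots + t^{n-1})$ to annihilate $\cA / (t^n - 1)\cA$. Therefore the module is canonically an $R_n$-module, isomorphic to $\cA \otimes_\Lambda R_n$, and the presentation matrix $tB - B^{\top}$ of $\cA$ over $\Lambda$ descends to the desired presentation of $H_1(\Sigma_n(K); R_n)$ over $R_n$.

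The main obstacle I anticipate is pinning down the precise sign and ordering conventions in the Mayer--Vietoris computation so that the presentation lands exactly on $tB - B^{\top}$ rather than on, say, $t B^{\top} - B$ or a negative of these; once that bookkeeping is fixed, the Wang-sequence step and the $R_n$-structure upgrade are essentially formal.
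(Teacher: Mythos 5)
The paper does not prove this lemma at all --- it is quoted directly from Burde--Zieschang--Heusener \cite[Proposition~8.39]{MR3156509} --- so there is no in-paper argument to compare against; your write-up is the standard textbook derivation and it is correct in outline: Mayer--Vietoris along the Seifert surface gives the $\Lambda$-presentation $tB-B^{\top}$ of $\cA=H_1(X_\infty)$, the Wang sequence plus filling in the branch torus identifies $H_1(\Sigma_n(K);\Z)$ with $\cA/(t^n-1)\cA$ equivariantly, and unimodularity of $B-B^{\top}$ lets you replace $(t^n-1)\cA$ by $(1+t+\dots+t^{n-1})\cA$, so the presentation descends to $R_n$ by right-exactness of the tensor product. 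One small point: for the last step you only need \emph{surjectivity} of $t-1$ on $\cA$ (which follows from $\cA/(t-1)\cA=0$, presented by the unimodular $B-B^{\top}$), since then $(1+\dots+t^{n-1})\cA=(t^n-1)\cA$; the stronger claim that $t-1$ ``acts invertibly on the quotient'' is true but not automatic for quotients (injectivity would need, e.g., that a surjective endomorphism of a finitely generated module over a commutative ring is injective), and it is not needed. As you note, the remaining issues are purely conventional (whether one lands on $tB-B^{\top}$ or a transpose/unit multiple), which is immaterial for the paper's applications since Smith normal forms and cokernels over the PIDs used there are unaffected.
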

In other words, $H_1(\Sigma_n(K); R_n)$ is isomorphic to the cokernel of $tB - B^{\top}$.

\begin{proposition}\label{prop:pid}
Let the integer $p\geq 2$ be a prime such that $R_p$ is a~PID.
\footnote{This is the case if and only if $p \leq 19$ (see e.g.~\cite[Theorem~11.1]{MR1421575}).
We expect that an adapted version of \cref{prop:pid} holds for all primes $p$, using that $R_p$ is always a Dedekind domain. But since we will only use the case $p = 3$ in this paper, we do not pursue this further.}
Let $q\in R_p$ be a prime element and $e \geq 1$ an integer. For a knot $K$, denote by $n_{p,q,e}(K) \geq 0$
the maximal integer such that $H_1(\Sigma_p(K); R_p)$ admits a summand
$R_p/(q^{f_1}) \oplus \dots \oplus R_p/(q^{f_{n_{p,q,e}(K)}})$ with $f_1, \dots , f_{n_{p,q,e}(K)} \geq e$.
Then $|n_{p,q,e}(K) - n_{p,q,e}(J)|$ is a lower bound for the Gordian distance between the knots $K$ and $J$.
\end{proposition}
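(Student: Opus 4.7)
The plan is to prove the bound by induction on the Gordian distance $d(K, J)$, reducing everything to the case of a single crossing change. By the triangle inequality, it suffices to show that $|n_{p,q,e}(K) - n_{p,q,e}(J)| \leq 1$ whenever $K$ and $J$ are related by a single crossing change; the general bound then follows by chaining such inequalities along an unknotting sequence of length $d(K,J)$.

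For a single crossing change, I would invoke the following classical fact: if $K$ and $J$ differ by a single crossing change, there exist Seifert surfaces $F_K, F_J$ of the same genus and bases of $H_1(F_K) \cong H_1(F_J)$ such that the corresponding Seifert matrices $B_K, B_J$ differ only in a single diagonal entry, say $B_K - B_J = c E_{kk}$ for some nonzero integer $c$ and some index $k$ (where $E_{kk}$ is the matrix with a $1$ in position $(k,k)$ and zeros elsewhere). Geometrically, one arranges $F_K$ and $F_J$ to agree outside a small neighborhood of the crossing and to look locally like a twisted band; a crossing change alters the number of twists of the band, affecting only the self-linking of the homology class represented by its core. If the two Seifert matrices initially have different sizes, stabilization by blocks $\left(\begin{smallmatrix}0 & 1\\ 0 & 0\end{smallmatrix}\right)$ makes them equal without changing the cokernel of $tB - B^\top$, and hence without changing $n_{p,q,e}$.

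With such a pair $B_K, B_J$ of common size $m$, set $A_K := tB_K - B_K^\top$ and $A_J := tB_J - B_J^\top$; by \cref{lem:presmatrix} these are presentation matrices over $R_p$ of $H_1(\Sigma_p(K); R_p)$ and $H_1(\Sigma_p(J); R_p)$. Their difference is $A_K - A_J = c(t-1)E_{kk}$, a matrix with a single nonzero entry, so as an $m \times m$ matrix over $S_p := R_p/(q^e)$ it has $\rk_{S_p} \leq 1$ in the sense of \cref{def:rank}. Since rank is subadditive (the column span of a sum is contained in the sum of the individual column spans), one obtains $|\rk_{S_p} A_K - \rk_{S_p} A_J| \leq 1$. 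Combining this with the identity $n_{p,q,e}(K) = m - \rk_{S_p}(A_K)$ provided by \cref{lemma:algebra} (and likewise for $J$) yields the desired single-crossing-change bound.

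The main obstacle is establishing the classical Seifert-matrix fact. A direct geometric argument proceeds by performing the oriented resolution $K_0$ of the crossing (either a $2$-component link or a knot), picking a Seifert surface $F_0$ for $K_0$, and building $F_K$ and $F_J$ by attaching to $F_0$ a band whose number of twists differs by one between the two cases. Only the basis element dual to this new band is affected, and only its self-pairing changes, producing the rank-one diagonal perturbation of the Seifert matrix used above.
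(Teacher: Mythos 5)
Your proof is correct, and its algebraic core is the same as the paper's: reduce to a single crossing change, observe that the two presentation matrices $tB-B^{\top}$ from \cref{lem:presmatrix} differ by a matrix of $\rk_S\leq 1$ over $S=R_p/(q^e)$, and conclude via subadditivity of $\rk_S$ together with \cref{lemma:algebra}. Where you differ is the input about Seifert matrices: the paper invokes Saeki's result that a crossing change relates Seifert matrices by an enlargement with a prescribed $2\times 2$ corner block (so the two matrices have different sizes, and one compares them after an S-equivalence), whereas you use the classical skein-theoretic lemma that $K$ and $J$ admit equal-size Seifert matrices differing in a single diagonal entry (by $\pm1$, in fact) --- the same lemma that underlies the usual proof of the Conway skein relation, and your sketch via the oriented resolution $K_0$ is essentially its standard proof: since the band attached to $F_0$ joins two distinct boundary components, $H_1(F_0)$ injects with quotient $\Z$, so a basis exists in which only the new generator meets the band, and only its self-pairing feels the change of half-twist. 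This route buys a slightly more elementary and self-contained reduction (no S-equivalence bookkeeping needed, and your stabilization step is in fact superfluous since your two surfaces have equal genus by construction), at the cost of having to justify the geometric normal form near the crossing ball; working diagrammatically with Seifert's algorithm on $D_+$, $D_-$, $D_0$ makes that justification immediate. Two cosmetic slips: the oriented resolution of a self-crossing of a knot is always a two-component link (never a knot), and the off-diagonal entries involving the new generator should be observed to agree because the band's core and its push-off change only inside a ball disjoint from the other basis curves; neither affects the argument.
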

By \cref{lem:presmatrix},
$n_{p,q,e}(K)$ equals $n_{q,e}(tB - B^{\top})$ (see \cref{def:n}) for a Seifert matrix $B$ of $K$.
Let us first deduce \cref{thm:unknotting} from \cref{prop:pid}, and prove the proposition afterwards.

\begin{proof}[Proof of~\cref{thm:unknotting}]
Let us assume towards a contradiction that $J$ is a fibered positive knot with $u(J) = 2$,
obtained from $K11n183$ by a single crossing change.
It follows from~\cite[Corollary~5.1]{Cromwell_homogeneous} that the crossing number of a fibered positive knot is at most four times its genus. Thus $c(J) \leq 4\cdot g(J) \leq 4\cdot u(J) = 8$.
A glance at the knot table~\cite{KnotInfo} tells us that $J$ must be $T(2,3)^{\# 2}$ or $T(2,5)$.
So it just remains to prove that $K11n183$ has Gordian distance at least $2$ from both of those knots.
Using the presentation matrices given by \cref{lem:presmatrix}, and~\cite{sage} to compute their Smith normal forms, one finds:
\begin{align*}
H_1(\Sigma_3(K11n183); R_3)       & \ \cong\ (R_3/(4))^{\oplus 2}, \\
H_1(\Sigma_3(T(2,3)^{\#2}); R_3) & \ \cong\ (R_3/(2))^{\oplus 2}, \\
H_1(\Sigma_3(T(2,5)); R_3)       & \ \cong\ 0.
\end{align*}
Thus the values of $n_{3,2,2}$ on $K11n183$, $T(2,3)^{\#2}$ and $T(2,5)$ are $2$, $0$ and $0$, respectively. By \cref{prop:pid}, this concludes the proof of \cref{thm:unknotting}.
\end{proof}
\begin{proof}[Proof of \cref{prop:pid}]
Assume that $K$ and $J$ are related by a single crossing change.
It will suffice to show that $|n_{p,q,e}(K) - n_{p,q,e}(J)| \leq 1$.
The knots $K$ and $J$ admit respective Seifert matrices $B$ and $C$ such that
\[
C = \begin{pmatrix}
B & v     & 0 \\
w^{\top} & x & 1 \\
0 & 0     & \pm 1
\end{pmatrix}
\]
for some integer column vectors $v,w$ and for $x\in \Z$~\cite{Sa_ua}.
Consider the matrix $E$, which has the same size as matrix $C$ but with all entries being zero except for the lower right corner, which is $\mp 1$. Now, let $\widetilde{B} = C + E$. 
This matrix $\widetilde{B}$ is S-equivalent to the matrix $B$.
Define $B' = t\widetilde{B} - \widetilde{B}^{\top}$, $C' = tC - C^{\top}$, and $E' = tE - E^{\top}$, all considered as matrices over the ring $R_p$.
Note that by \cref{lem:presmatrix}, $B'$ and $C'$ serve as presentation matrices for $H_1(\Sigma_p(K); R_p)$ and $H_1(\Sigma_p(J); R_p)$, respectively. 
Furthermore, we have the relationship $C' = B' - E'$.
With $S = R_p/(q^e)$,
 the rank $\rk_S E'$ of the matrix $E'$ is equal to~$1$ (see \cref{def:rank}).
We now have
\[
n_{p,q,e}(J) = m - \rk_S C' \geq m - \rk_S B' - \rk_S E' = n_{p,q,e}(K) - 1,
\]
where the equalities are due to \cref{lemma:algebra},
and the inequality due to the subadditivity of $\rk_S$,
which follows quickly from its definition.
Hence $n_{p,q,e}(K) - n_{p,q,e}(J) \leq 1$. Switching the roles of $K$ and $J$
gives us $|n_{p,q,e}(J) - n_{p,q,e}(J)| \leq 1$, as desired.
\end{proof}
\begin{rem}
We are not aware of an alternative method to bound the Gordian distance of $K11n183$ and $T(2,3)^{\# 2}$.
For the Gordian distance of $K11n183$ and $T(2,5)$, on the other hand, one does not need the full strength of \cref{prop:pid}.
Indeed, by the universal coefficient theorem, $H_1(\Sigma_3(K11n183); \mathbb{F}_2) \cong \mathbb{F}_2^4$ and ${H_1(\Sigma_3(T(2,5));\mathbb{F}_2) = 0}$.
Now it suffices to use the following well-known lower bound for the Gordian distance of $K$ and $J$,
established by Wendt~\cite{wendt} (in the very same paper in which the unknotting number was first introduced): namely, for all primes $p,q\geq 2$,
\[
\frac{1}{p-1} \Big|
\dim_{\mathbb{F}_q}H_1(\Sigma_p(K); \mathbb{F}_q) -
\dim_{\mathbb{F}_q}H_1(\Sigma_p(J); \mathbb{F}_q)
\Big|
\]
is less than or equal to the Gordian distance of $K$ and $J$.

Yet another way to show that $K11n183$ and $T(2,5)$ have (algebraic) Gordian distance at least $2$,
is to use~\cite[Corollary~2.8]{Murakami_KnotMetrics}.
\end{rem}
\begin{rem}
Since the lower bound in \cref{prop:pid} (as well as Wendt's bound) only depends on the S-equivalence class of the Seifert matrix, it follows that they are actually lower bounds for the \emph{algebraic Gordian distance}.
As a consequence, there is not even an algebraic unknotting move transforming
any Seifert matrix of $K11n183$ into a Seifert matrix of a fibered positive knot $J$ with $u(J) = 2$.
\end{rem}

\section{Obstructing trefoil plumbings}\label{sec:trefoil_plumbings}

For context, we first remind the reader that every fibered link is related to the unknot via a sequence of Hopf plumbings and deplumbings~\cite{Harer_Hopf82, GirouxGoodman_plumbing, giroux_hopf, Goodmann_hopf}. Here, Hopf plumbing is the operation of gluing a (positive or negative) Hopf band onto a given Seifert surface $S$, where the gluing region is specified by a properly embedded interval on $S$ (see~\cref{fig:plumbing} (left arrow) for an illustration). The result of a Hopf plumbing operation is a new Seifert surface whose first Betti number is one more than that of $S$. Note that an isotopy of the plumbing arc (along properly embedded arcs in $S$) induces an isotopy of the resulting Seifert surface. Beware that the plumbing operation occurs locally in a neighbourhood of the plumbing interval; in particular, the plumbed Hopf band must not tangle with any part of the Seifert surface $S$.

\begin{figure}[b]
\centering
\includegraphics[width=\textwidth]{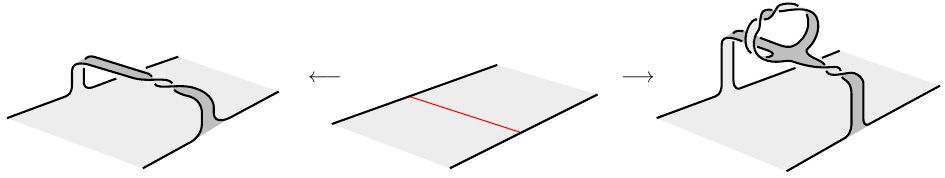}
\caption{Plumbing a positive Hopf band (left) or a positive trefoil (right) to a Seifert surface (middle). The plumbing operation is specified by an interval (shown in red), properly embedded in the Seifert surface, and occurs in a neighbourhood of that interval.}
\label{fig:plumbing}
\end{figure}

It is known that a given fibered knot has, up to isotopy, a unique Seifert surface of minimal genus, namely its fiber surface (see e.g.\ \cite[Proposition 2.19]{Rudolph_ComplexCurves} and~\cite[Lemma 5.1]{EdmondsLivingston_Fibered}). This allows us to make no distinction between fibered knots and their fiber surfaces. When we speak of a knot arising as a plumbing of Hopf bands (or trefoils), we mean that the knot is the boundary of a fiber surface obtained by plumbing Hopf bands (or trefoils).

In this section, we provide the proof of \cref{thm:plumbing}, which demonstrates that the knot $K11n183$ cannot be obtained by plumbing trefoils to an unknot. The trefoil plumbing operation is illustrated in \cref{fig:plumbing} (right arrow).

\begin{proposition}\label{prop:twotrefoils}
Let $K$ be a knot arising as the plumbing of two positive trefoils.
Then either $K$ is one of the three knots $T(2,3)^{\# 2}, T(2,5), K10n14$,
or the Levine--Tristram  signature evaluated at $e^{0.11\pi i}$ equals $-2$.
\end{proposition}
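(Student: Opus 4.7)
The plan is to enumerate combinatorially all plumbings of two positive trefoils, and then compute the Levine--Tristram signature at $\omega = e^{0.11\pi i}$ for each.

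First I would parametrize the plumbings. A plumbing of two positive trefoils is obtained by gluing the fiber surface $\Sigma_2$ of the second trefoil onto the fiber surface $\Sigma_1$ of the first along a properly embedded essential arc $\alpha \subset \Sigma_1$; the choice of arc on $\Sigma_2$ is irrelevant up to symmetries of~$\Sigma_2$. Since $\Sigma_1$ is a once-punctured torus, isotopy classes of essential arcs are in bijection with primitive classes in $H_1(\Sigma_1, \partial\Sigma_1; \Z) \cong \Z^2$, i.e.~with slopes $(p,q) \in \mathbb{P}^1(\Q)$. Modulo the order-$6$ monodromy action on~$\Sigma_1$ and the involution swapping the two trefoil summands, this yields a family $\{K_{p,q}\}$ of plumbings indexed by orbits in $\mathbb{P}^1(\Q)$.

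Second, for each pair $(p,q)$ I would produce a Seifert matrix $V_{p,q}$ for $K_{p,q}$ in a basis adapted to the plumbing decomposition, of block form
\[
V_{p,q} = \begin{pmatrix} V_T & N_{p,q} \\ 0 & V_T \end{pmatrix},
\]
where $V_T$ is the standard $2\times 2$ Seifert matrix of the positive trefoil and the $2\times 2$ block $N_{p,q}$ records the linking between a symplectic basis of $H_1(\Sigma_2)$ and $H_1(\Sigma_1)$; its entries depend linearly on $(p,q)$ via the intersection numbers of $\alpha$ with the cores of the Hopf bands making up $\Sigma_1$.

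Third, I would compute $\sigma_\omega(K_{p,q})$. Since $0.11\pi < \pi/3$ and the signature function of the positive trefoil has its first jump at $\pi/3$ (the argument of a root of $t^2 - t + 1$), the diagonal trefoil blocks of the Hermitian form $H_{p,q} = (1-\omega)V_{p,q} + (1-\bar\omega)V_{p,q}^\top$ are indefinite of signature~$0$, and they are invertible at this value of~$\omega$. A Schur complement calculation then reduces $\sigma_\omega(K_{p,q})$ to the signature of a $2 \times 2$ Hermitian matrix $S_{p,q}$ whose entries are explicit quadratic functions of~$(p,q)$. A direct case analysis should show that $S_{p,q}$ is negative-definite (giving $\sigma_\omega = -2$) except for three exceptional arc classes in $\mathbb{P}^1(\Q)$, which one identifies as those producing $T(2,3)^{\#2}$, $T(2,5)$ and $K10n14$, where $S_{p,q}$ is degenerate or of signature~$0$.

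The main obstacle is the finite case analysis: I would need to verify that the three exceptions are exactly those listed (by matching Seifert matrices to known ones), and to carry out the negative-definiteness argument uniformly over the remaining infinite parameter set. This should be feasible, if tedious, and I would invoke a computer algebra system for the bookkeeping, analogously to the proof of \cref{thm:unknotting}.
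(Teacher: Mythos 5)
Your framework is essentially the paper's: parametrize the plumbing by the homology class of the plumbing arc on the once-punctured trefoil fiber, write a block-triangular Seifert matrix with two trefoil blocks, identify the small cases, and compute $\sigma_{e^{0.11\pi i}}$ by linear algebra (your Schur-complement/inertia-additivity reduction is a legitimate variant of the paper's interlacing-plus-determinant argument, since the trefoil block is indeed nondegenerate of signature $0$ at $e^{0.11\pi i}$). The genuine gap is in the decisive step: you assert that the $2\times 2$ Schur complement $S_{p,q}$ is negative definite for all but three arc classes, and propose to verify this by ``a direct case analysis \dots uniformly over the remaining infinite parameter set'' with computer-algebra bookkeeping. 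A computer cannot enumerate infinitely many coprime pairs $(p,q)$; what is missing is the idea that collapses the infinite check to a finite one. In the paper this is done by observing that the only $(a,b)$-dependence that matters enters through $d=a^2+ab+b^2$: the determinant of the Hermitian matrix is (up to a positive factor) $\Delta_d(e^{0.11\pi i})$, where $\Delta_d(t)=(t^{-1}-1+t)^2+d\,(t^{-1}-2+t)$ is strictly decreasing in $d$ at this root of unity, while a fixed $3\times 3$ principal submatrix independent of $(a,b)$ pins down all but one eigenvalue. Hence only $d\in\{0,1,3\}$ (the three exceptional knots) and the single inequality $\Delta_7(e^{0.11\pi i})<0$ need checking. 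Your route would need an analogous uniform statement, e.g.\ that $\det S_{p,q}$ has the sign of $-\Delta_d(e^{0.11\pi i})$ and that a diagonal entry of $S_{p,q}$ is negative for all large $d$; without such a reduction the ``tedious but feasible'' claim is not yet a proof.

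Two smaller points. First, by restricting to essential arcs (primitive classes in $\mathbb{P}^1(\Q)$) you exclude the boundary-parallel arc, i.e.\ the class $(0,0)$; that case is a genuine plumbing (the connected sum $T(2,3)^{\#2}$, one of your three listed exceptions) and must be treated separately. Second, the identification of the exceptional classes with $T(2,3)^{\#2}$, $T(2,5)$ and $K10n14$ still has to be carried out (as you note, by matching Seifert matrices or Alexander polynomials), and for those knots the signature at $e^{0.11\pi i}$ is $0$ rather than ``degenerate''; this is consistent with your scheme but should be stated as the outcome of the exceptional cases rather than left open.
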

\begin{proof}
Given a fiber surface obtained as plumbing of two positive trefoil fiber surfaces, it is possible to find an associated Seifert matrix of the form: 
\begin{equation*}
A= \begin{pmatrix*}[r]
-1& 1& 0& 0\\   
0& -1& 0& 0\\
0& 0& -1& 1\\
a& b& 0& -1\\
\end{pmatrix*}
\end{equation*}
for integers $a, b$. Note that $a,b$ hold geometric significance: the second trefoil surface, attached to the first, consists of two Hopf bands (see \cref{fig:plumbing}). One Hopf band is distant from the fiber surface associated to the first trefoil, while the other intersects it in a thickened interval. The pair $(a, b)$ represents the algebraic intersection of this interval with the first trefoil surface's basis elements of the first homology group. Hence, either $(a, b) = (0, 0)$ or $a$ and~$b$ are coprime. Since the fiber surface associated to a trefoil is a punctured torus, the homology class of the plumbing interval already determines its isotopy class. Thus the pair~$(a,b)$ determines the isotopy class of the resulting plumbing surface (and the knot spanned by it).

Setting $d \coloneqq a^2 + ab + b^2 \geq 0$, one finds that
the Alexander polynomial obtained from $A$ equals
\[
\Delta_d(t) \coloneqq (t^{-1} - 1 + t)^2 + d\cdot (t^{-1} - 2 + t).
\]
One verifies that either one of the following cases holds, or $d \geq 7$:
\[
\begin{array}{l|l|l}
(a,b) & d & K \\[.5ex]\hline
\rule{0pt}{2.5ex}(0,0) & 0   & T(2,3)^{\# 2} \\
\rule{0pt}{2.5ex}(1,0),(0,1),(-1,1),(-1,0),(0,-1),(1,-1) & 1 & T(2,5) \\
\rule{0pt}{2.5ex}(1,1),(-1,2),(-2,1),(-1,-1),(1,-2),(2,-1) & 3 & K10n14 \\
\end{array}
\]
So it just remains to show that if $d \geq 7$, then $\sigma_{e^{0.11 \pi i}}(K) = -2$.
Let us assume $d\geq 7$. One calculates
\[
\Delta_d(t) = (\Delta_d(t) - \Delta_7(t)) + \Delta_7(t) = (d-7)\cdot (t^{-1} - 2 + t) + \Delta_7(t).
\]
Since the first summand is non-positive for $t = e^{0.11 \pi i}$, we have
\[
\Delta_d(e^{0.11\pi i}) \leq \Delta_7(e^{0.11\pi i}) < 0.
\]
Recall that $\sigma_{e^{0.11 \pi i}}(K)$ may be calculated as the signature
of the Hermitian matrix $B \coloneqq (1 - e^{0.11 \pi i})A + (1 - e^{-0.11 \pi i})A^{\top}$,
i.e.~$\sigma_{e^{0.11 \pi i}}(K) = n_+(B) - n_-(B)$ for $n_{\pm}$ the number of positive and negative eigenvalues of $B$, respectively.
Note that the top left $3\times 3$ submatrix of $B$ does not depend on $(a,b)$,
and one computes that it has one positive and two negative eigenvalues. It remains to prove that the fourth eigenvalue is also negative. Since $\det(B)=\Delta_d(e^{0.11\pi i})$ is negative, as we showed before, and $\operatorname{sign}(\det B) = (-1)^{n_-(B)}$, it follows that $n_+(B) = 1$ and $n_-(B) = 3$, as desired.
\end{proof}

\begin{proof}[Proof of \cref{thm:plumbing}]
A plumbing of $n$ trefoil fiber surfaces is a fiber surface, and thus genus-minimizing.
Let us assume towards a contradiction that $K11n183$ is a plumbing of positive trefoils.
Since the genus is additive under plumbing and $g(K11n183) = 3$, it follows that $K11n183$ is a plumbing of precisely three trefoils.
By \cref{prop:twotrefoils}, the knot $K11n183$ is a plumbing of a (positive) trefoil to one of the following knots~$K$: (1) $T(2,3)^{\# 2}$, (2) $T(2,5)$, (3) $K10n14$, or (4) a knot $K$ with $\sigma_{e^{0.11 \pi i}}(K) = -2$. Let us rule out these cases one by one. Note that it is enough to show that $K$ cannot be obtained from $K11n183$ by a positive-to-negative crossing change.

In cases (1) and (2), \cref{thm:unknotting} applies and gives the result. For case (3) we proceed similarly as in the proof of that theorem and compute
\[
H_1(\Sigma_3(K10n14); R_3) \cong R_3/(5),
\]
so \cref{prop:pid} implies that the Gordian distance of $K11n183$ and $K10n14$ is at least~$2$.
For case (4), recall that a positive-to-negative crossing change does not decrease the Levine--Tristram  signature at a fixed point on the circle. But this contradicts
$\sigma_{e^{0.11 \pi i}}(K) = -2 < 0 = \sigma_{e^{0.11 \pi i}}(K11n183)$.
\end{proof}

\begin{rem}
There exists an obstruction for a knot $K$ being the plumbing of a positive trefoil and another knot $J$, expressed in terms of the Alexander polynomial $\Delta_K$ of $K$, as demonstrated by Hironaka~\cite{Hironaka}.
In our context, Hironaka's obstruction implies that if the knot $K$ arises from the unknot by successive plumbings of positive trefoils, then the leading coefficient of $\Delta_K$ and the evaluation $\Delta_K(0)$, both of which are $\pm 1$, have the same sign. Note that this condition does not depend on the choice of normalization of $\Delta_K$; it is equivalent to Seifert matrices $A$ associated to the fiber surface of $K$ satisfying $\det(A) = 1$.
However, one can see that this condition is in fact satisfied by all knots that arise from the unknot by the plumbing and deplumbing of positive Hopf bands, i.e.~by all fibered strongly quasipositive knots.
So Hironaka's criterion cannot be used to obstruct a fibered positive knot from being a positive trefoil plumbing.
\end{rem}

\section{Fibered positive knots with potentially large unknotting number}\label{sec:counterexample}
    \begin{figure}[b] 
 	\centering
 	\includegraphics[width=0.9\textwidth]{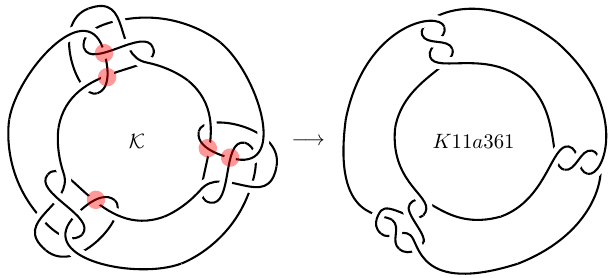}
  \caption{Applying five crossing changes to $\K$ to obtain $K11a361$.}
  \label{fig:11a361} 
 \end{figure}

Let us begin this section by proving \cref{thm:counterexample},
which claims that the knot $\K$ is a fibered positive knot of genus $7$,
but one cannot unknot it by 7 crossing changes in its diagram $D$ shown in \cref{fig:counterexample}.
\begin{proof}[Proof of \cref{thm:counterexample}]
Since $D$ is a positive diagram, $\K$ is a positive knot.
To confirm that $\K$ is fibered and compute its genus,
observe that $\K$ is a plumbing of 14 positive Hopf bands.
These statements about $\K$ are also 
the particular case $n=1$ of \cref{prop:counterexample_general}\eqref{item:fibpos} and \eqref{item:genus} below, proven below in greater detail.

To see that the minimal number of crossing changes needed to transform $D$ into a diagram of the unknot is $9$, we have checked all possible combinations of at most $8$ crossing changes in $D$ and observed that the Alexander polynomials of the resulting knots are non-trivial. The computations can be accessed at~\cite{data}.
An unknotting sequence with 9 crossings is easily found in $D$ (see also \cref{prop:counterexample_general}\eqref{item:uD} below).
\end{proof}

\begin{rem}\label{rem:types}
Since we cannot find 8 crossing changes that unknot $\K$,
we conjectured (see \cref{conj:counterexample}) that in fact $u(\K) = 9$, which would make $\K$ a counterexample to Stoimenow's \cref{conj:pos_fib}.
Let us now discuss why $u(\K) > 7$ is hard to show.
Although the literature is awash with lower bounds for the unknotting number,
most of them fall into one of the following four types:
\begin{itemize}
    \item [(a)] Bounds that are also lower bounds for the smooth 4-genus~$g_4$, for example, the bounds coming from the Levine--Tristram  signatures~\cite{levine-signature,tristram}, from 
the $\Upsilon$-invariant~\cite{MR3667589}, or from slice-torus invariant such as the $\tau$-invariant from knot Floer homology~\cite{tau} or the $s$-invariant from Khovanov homology~\cite{s_inv}.
    \item [(b)] Bounds that are also lower bounds for the \emph{algebraic unknotting number}~$u_a$, that is, the minimal number of crossing changes needed to transform a given knot into a knot with trivial Alexander polynomial. These are precisely the bounds determined by the S-equivalence class of the Seifert matrix, such as bounds coming from the Levine--Tristram  signatures, or coming from homology of cyclic coverings (such as \cref{prop:pid}), and further bounds in~\cite{Na_ua,Mu_ua,Fo_ua,Sa_ua,St004,BFII_ua,BFI_ua}.
    \item [(c)] Bounds that are also bounds for the \emph{proper rational unknotting number}~$u_q$, that is, the minimal number of proper rational tangle replacements needed to transform a given knot into the unknot. See for example~\cite{Lines,McCoy_rat_un,iltgen2021khovanov,arXiv:2203.09319,McCoy_Zentner}.
    \item [(d)] Bounds that are at most two~\cite{MR0808108,Mi_u1,OS_u1,GL_u1,Greene_u1,McCoy_u1}.
\end{itemize}
Note that the inequalities $g_4, u_a, u_q \leq u$ hold for all knots.
As we shall see below in \cref{prop:g4uauqK}, $g_4, u_a, u_q \leq 7$ for $\K$, so none of the bounds of type (a), (b), (c) may be used to establish $u(\K) > 7$; nor may one use bounds of type (d), of course.
One of the few lower bounds for $u$ not subsumed by (a)--(d)
is Owens' obstruction~\cite[Theorem~3]{Owens08} coming from Heegaard Floer homology,
which can, however, only be used to show $u(\K) > |\sigma(\K)/2| = 5$.
\end{rem}
\begin{proposition}\label{prop:g4uauqK}
The knot $\K$ shown in \cref{fig:counterexample} satisfies
$g_4(\K) = 7$, $u_a(\K) \leq 7$ and $u_q(\K) \leq 5$.
\end{proposition}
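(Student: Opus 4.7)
The plan is to establish the three bounds in turn, in increasing order of difficulty.

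For the equality $g_4(\K) = 7$, the upper bound $g_4(\K) \leq g(\K) = 7$ is standard, since any minimal Seifert surface pushes smoothly into the four-ball. For the matching lower bound, I would invoke Rudolph's theorem that positive knots are strongly quasipositive~\cite{Rud_Pos_is_SQP}, combined with the fact that strongly quasipositive knots satisfy $g_4 = g$ (a consequence of Kronheimer--Mrowka's adjunction inequality, as observed by Rudolph). Since $g(\K) = 7$ by~\cref{thm:counterexample}, this yields $g_4(\K) = 7$.

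For $u_a(\K) \leq 7$, my plan is to exploit the sequence of five crossing changes from $\K$ to $K11a361$ displayed in~\cref{fig:11a361}. Since $u_a$ is subadditive under crossing changes, it suffices to verify that $u_a(K11a361) \leq 2$. This can be done either by exhibiting two crossing changes in a diagram of $K11a361$ that yield a knot with trivial Alexander polynomial, or by directly computing $u_a$ from a Seifert matrix of $K11a361$ via the algorithmic method of Borodzik--Friedl, which extracts $u_a$ from the S-equivalence class of the Seifert form. Either approach gives $u_a(\K) \leq 5 + 2 = 7$.

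For $u_q(\K) \leq 5$, the five crossing changes of~\cref{fig:11a361} do not suffice on their own: since $K11a361$ is non-trivial, one only obtains $u_q(\K) \leq 5 + u_q(K11a361)$ with $u_q(K11a361) \geq 1$. Instead, I would exhibit five proper rational tangle replacements directly in the diagram $D$ of~\cref{fig:counterexample} that transform it into a diagram of the unknot. The key point is that a single rational tangle replacement is strictly more powerful than a single crossing change: if several crossings of $D$ lie inside a common rational tangle, one replacement can undo all of them at once. Concretely, I would inspect the five crossing regions of~\cref{fig:11a361}, enlarge each one to a rational tangle neighbourhood containing nearby crossings, and then check that a suitable rational replacement at each of these five neighbourhoods produces a diagram that can be simplified to the unknot by Reidemeister moves (a step that is easily confirmed by computer).

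The main obstacle is the $u_q$ bound: the five rational tangle regions must be chosen judiciously, each must be verified to be a genuine rational tangle, and the resulting diagram must be confirmed to represent the unknot. None of this is conceptually deep, but it is a concrete diagrammatic task that one expects to require some experimentation before a clean set of five replacements is found.
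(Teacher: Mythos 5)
Your treatment of $g_4(\K)=7$ and of $u_a(\K)\leq 7$ matches the paper: positivity gives $g_4=g$, and the five crossing changes of \cref{fig:11a361} reduce the second bound to $u_a(K11a361)\leq 2$. The only remark there is that you leave $u_a(K11a361)\leq 2$ to a computation (Borodzik--Friedl, or finding two crossing changes to an Alexander-polynomial-one knot); this would indeed work, but note that the paper deliberately supplies an explicit Seifert-matrix certificate (two algebraic unknotting moves) precisely because the tables state $u_a(K11a361)=2$ without proof, so in a complete write-up you would still have to carry that computation out.

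The genuine gap is the bound $u_q(\K)\leq 5$. What you offer is a search strategy, not an argument: you propose to enlarge the five crossing regions of \cref{fig:11a361} to rational tangle neighbourhoods and hope that five replacements there give the unknot, conceding that this ``requires some experimentation.'' Nothing guarantees that such a choice exists --- changing those five crossings yields the non-trivial knot $K11a361$, and there is no reason offered why rational tangles localized near those same five sites have enough extra power to unknot; until explicit tangles and an explicit unknot diagram are produced, the claim is unproved. The paper's route is different and concrete: it uses the fact that $D=D_1$ is a cyclic composition of three copies of a fixed tangle $T$ (\cref{prop:counterexample_general}, case $n=1$). Two proper rational tangle replacements turn one copy of $T$ into a single positive crossing (\cref{fig:untangling_tangle}), one further replacement is applied to a second copy, one more replacement (\cref{fig:rational_unknotting}) reduces the result to the closure of $T$, which is $T(2,5)$, and $u_q(T(2,5))=1$; in total $2+1+1+1=5$. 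To close your gap you would either have to exhibit and verify your five replacements explicitly, or switch to an argument of this structured kind.
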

\begin{proof}
Since $\K$ is positive, $g_4(\K) = g(\K) = 7$ follows.
The statement $u_q(\K) \leq 5$ will be shown in greater generality in \cref{prop:counterexample_general} below. Let us now prove $u_a(\K) \leq 7$.
\cref{fig:11a361} shows a set of five crossing changes from $\K$ to the knot $K11a361$.
So it suffices to show that $K11a361$ has algebraic unknotting number $2$,
which we shall do now (note that $u_a(K11a361) = 2$ is stated in the online tables~\cite{knotorious,KnotInfo}, but no certificate is provided).

    Applying Seifert's algorithm to the diagram of $K11a361$ shown in \cref{fig:11a361} and appropriately choosing a basis of the first homology of the resulting surface yields the following Seifert matrix of $K11a361$: 
    \begin{equation*}
A=\begin{pmatrix*}[r]
-1& 1& 0& 0\\   
0& -1& 1& 0\\
0& 0& -3& 2\\
0& 0& 1& -3\\
\end{pmatrix*}
\end{equation*} Consider the base change given by the matrix $P$ defined below \begin{equation*}
P=\begin{pmatrix*}[r]
1& 1& 0& 0\\   
1& 0& 0& 0\\
-2& -1& -1& -1\\
-2& -1& -1& 0\\
\end{pmatrix*},
\end{equation*} and apply an algebraic unknotting move by adding $w^{\top} w$ where $w= \begin{pmatrix}
1 & 1 & -1 & -1
\end{pmatrix} $
to obtain the matrix  \begin{equation*}
B=PAP^{\top} + w^{\top}w=\begin{pmatrix*}[r]
0& 0& 0& 0\\   
1& 0& 0& 0\\
0& 1& -4& -3\\
0& 1& -2& -4\\
\end{pmatrix*}.
\end{equation*} Applying a reduction to $B$ to remove the first and the second rows/columns, and performing a new algebraic unknotting move by adding $v^{\top} v$ where $v= \begin{pmatrix}
2 & 1
\end{pmatrix}$ yields the matrix \begin{equation*}
\begin{pmatrix*}[r]
 -4& -3\\
 -2& -4\\
\end{pmatrix*} + v^{\top} v = \begin{pmatrix*}[r]
0& -1\\
0& -3\\
\end{pmatrix*},
\end{equation*}
which has trivial Alexander polynomial. We have thus algebraically unknotted $K11a361$ by two moves, showing $u_a(K11a361) \leq 2$.
\end{proof}

Finally, let us consider a generalization of our example $\K$.
\cref{fig:infinite_family} shows an infinite family of knots $\K_n$,
with $\K_1 = \K$. 
\begin{proposition} \label{prop:counterexample_general}
    For $n\geq 1$, let $D_n$ be the positive knot diagram shown in \cref{fig:infinite_family}. Then, the knot $\K_n$ represented by $D_n$ has the following properties.
    \begin{enumerate}
        \item $\K_n$ is fibered and positive. \label{item:fibpos}
        \item $\K_n$ is not braid positive. \label{item:notbraidpos}
        \item $\K_n$ has $3$-and $4$-genus $g(\K_n)=g_4(\K_n)=2+5n$. \label{item:genus}
        \item There is a set of $2+7n$ crossings in the diagram $D_n$ such that changing these crossings turns $D_n$ into a diagram of the unknot. In particular, \label{item:uD}
        $$2+5n\leq u(\K_n)\leq 2+7n.$$
        \item The algebraic unknotting number $u_a(\K_n)$ of $\K_n$ satisfies \label{item:ua}
$$1+2n \leq u_a(\K_n) \leq 7n.$$
        \item The proper rational unknotting number $u_q(\K_n)$ of $\K_n$ satisfies \label{item:uq}
 $$1+2n\leq u_q(\K_n)\leq 1+4n.$$ 
    \end{enumerate}
\end{proposition}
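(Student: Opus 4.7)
The plan is to treat all six items simultaneously by induction on $n$, with the base case $n=1$ already settled by \cref{thm:counterexample} and \cref{prop:g4uauqK}. The inductive step then reduces to analyzing the effect of a single insertion of the local pattern that turns $D_{n-1}$ into $D_n$.

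For items (1) and (3), I would exhibit $\K_n$ explicitly as a plumbing of $4+10n$ positive Hopf bands on a disk, in the style of \cref{fig:11n183}(a), with each insertion of the local pattern contributing $10$ additional Hopf bands. Positivity of $D_n$ is immediate from the diagram; fiberedness follows from Stallings' theorem applied to the Hopf plumbing; the resulting fiber surface has first Betti number $4+10n$ and hence genus $2+5n$, which equals $g(\K_n)$ since the fiber surface of a fibered knot is genus-minimizing. The equality $g_4(\K_n) = g(\K_n)$ then follows from Rudolph's slice-Bennequin inequality for positive knots.

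For items (4), (5) and (6), the upper bounds are obtained by extending, per insertion, the sequences already constructed in the $n=1$ case: it suffices to verify locally that the inserted tangle admits $7$ crossing changes, $7$ algebraic unknotting moves, or $4$ proper rational tangle replacements converting it to the corresponding trivial local tangle. The lower bound $u(\K_n) \geq 2+5n$ in (4) follows from positive knots being strongly quasipositive~\cite{Rudolph93}. For the matching lower bounds $u_a(\K_n), u_q(\K_n) \geq 1+2n$, I would read off a Seifert matrix from the Hopf-plumbing construction and compute a Levine--Tristram signature at a carefully chosen $\omega$ on the unit circle; I expect $|\sigma_\omega(\K_n)|/2$ to equal $1+2n$, which simultaneously bounds $u_a$ and $u_q$ from below.

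The main obstacle is item (2), namely showing that $\K_n$ is not braid positive for any $n\geq 1$. My plan is to apply a polynomial obstruction that scales predictably under the local insertion. Candidate tools include the Morton--Franks--Williams inequality (which is an equality for braid positive knots), obstructions of Stoimenow from the Alexander or HOMFLY polynomial, or the condition that the maximal $z$-degree coefficient of the HOMFLY polynomial equals $v^{2g}$ for braid positive knots. After computing the chosen invariant of $\K_n$ as an explicit function of $n$ via a tangle skein argument localized on the inserted pattern, one verifies that the obstruction is violated for every $n \geq 1$. The difficulty is precisely in producing an obstruction that scales uniformly across the entire family, rather than only handling each $n$ separately, and I expect this to be where the bulk of the work lies.
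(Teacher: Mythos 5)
Your outline for items (1), (3), (4) and for the upper bounds in (5), (6) matches the paper's strategy (explicit Hopf plumbings, the positive-diagram genus argument, and tangle-by-tangle conversions, merely reorganized as an induction on $n$), but two of your steps have genuine problems. The most serious is the lower bound in item \eqref{item:uq}: Levine--Tristram signatures are \emph{not} lower bounds for the proper rational unknotting number. A single proper rational tangle replacement can change $\sigma_\omega$ by an arbitrary amount; for instance $u_q(T(2,5))=1$ while $|\sigma(T(2,5))|/2=2$, and more generally $T(2,2k+1)$ has $u_q=1$ with unbounded signature. So the plan of finding an $\omega$ with $|\sigma_\omega(\K_n)|/2 = 1+2n$ cannot establish $u_q(\K_n)\geq 1+2n$ (it is fine for $u_a$, which is what type (b) bounds control). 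The paper instead uses the Montesinos trick: $\Sigma_2(\K_n)$ is obtained by surgery on a link with $u_q(\K_n)$ components (and from a $\Z HS^3$ by surgery on a link with $u_a(\K_n)$ components), so the minimal number of generators of $H_1(\Sigma_2(\K_n);\Z)$ bounds both $u_a$ and $u_q$ from below; this homology is computed via Mayer--Vietoris from the tangle decomposition, using that the double branched cover of the tangle $T$ is the figure-eight sister $m003$ with $H_1\cong\Z\oplus\Z_5$, yielding at least $2n+1$ generators. You would need this (or some other surgery-theoretic bound) to close item \eqref{item:uq}.

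Item \eqref{item:notbraidpos} is also left open in your proposal: you list candidate obstructions but do not exhibit one that works for all $n$, and you correctly identify this as the crux. The paper resolves it with Ito's HOMFLYPT criterion: it computes $\tilde{P}_{\K_1}(\alpha,0)=-\alpha^4+7\alpha^3+9\alpha^2$ and then, applying the skein relation inductively along the crossing changes of \cref{fig:tangle_unknotting}(b),(c), obtains $\tilde{P}_{\K_n}(\alpha,0)=-n\alpha^{2n+2}+(2+5n)\alpha^{2n+1}+(3+6n)\alpha^{2n}$, whose negative coefficient violates the positivity condition for braid positive knots; this is exactly the ``obstruction that scales under the local insertion'' you were hoping for, so your plan is salvageable but not yet a proof. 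Finally, a minor point: taking \cref{thm:counterexample} and \cref{prop:g4uauqK} as your induction base is partially circular, since the paper establishes fiberedness, the genus of $\K$, and $u_q(\K)\leq 5$ only as the $n=1$ case of this very proposition; you would need to prove the base case of items \eqref{item:fibpos}, \eqref{item:genus}, \eqref{item:uq} directly.
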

The proposition motivates the following conjecture.

\begin{figure}[tb] 
 	\centering
 	\includegraphics[width=0.7\textwidth]{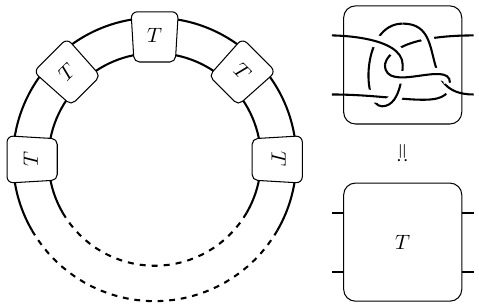}
 	\caption{Diagram $D_n$ representing $\K_n$ is obtained by cyclically connecting $2n+1$ copies of the tangle $T$, as indicated. $D_1$ is shown in \cref{fig:counterexample}.}
\label{fig:infinite_family} 
\end{figure}
\begin{figure}[p] 
 	\centering
 	\includegraphics[width=0.8\textwidth]{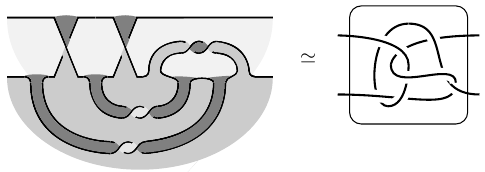}
 	\caption{The fiber surface of $\K_n$ is obtained from $T(2,4n+2)$ by plumbing $3$ positive Hopf bands for each of the copies of $T$.}
\label{fig:counterexampletangle}
\end{figure}
\begin{figure}[p] 
 	\centering
 	\includegraphics[width=0.7\textwidth]{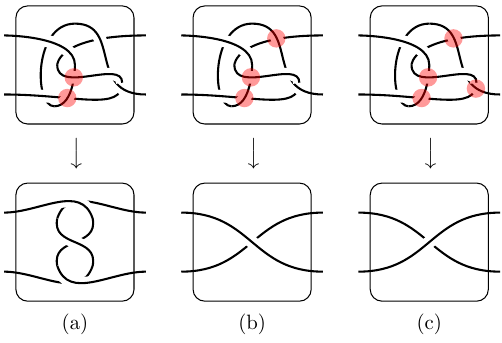}
 	\caption{Changing the marked crossings of the tangle $T$ in the upper row, yields tangles isotopic to the tangles shown in the lower row.
  }
  \label{fig:tangle_unknotting} 
 \end{figure}
 \begin{figure}[p] 
 	\centering
 	\includegraphics[width=0.9\textwidth]{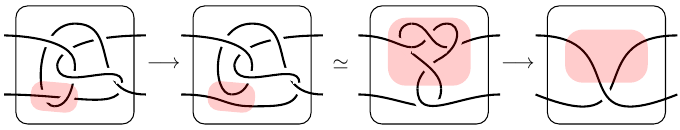}
 	\caption{Two proper rational tangle replacements transform the tangle $T$ into a single positive crossing.}
  \label{fig:untangling_tangle} 
 \end{figure}

 \begin{figure}[p] 
 	\centering
 	\includegraphics[width=\textwidth]{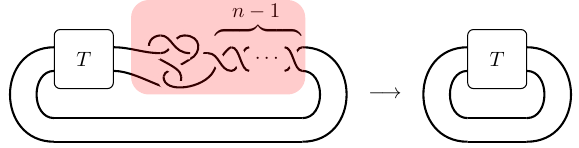}
 	\caption{A proper rational tangle replacement.}
  \label{fig:rational_unknotting} 
 \end{figure}
\begin{conjecture}\label{conj:general}
    The unknotting number of $\K_n$ is $2+7n$. 
\end{conjecture}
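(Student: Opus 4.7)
The plan is to establish $u(\K_n) = 2 + 7n$. The upper bound $u(\K_n) \leq 2 + 7n$ is already provided by part (4) of \cref{prop:counterexample_general}, so the substantive task is to prove the lower bound $u(\K_n) \geq 2 + 7n$.

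This lower bound exceeds every currently known obstruction by a significant margin. Indeed, $g_4(\K_n) = 2+5n$ bounds from above all type-(a) invariants from \cref{rem:types}; $u_a(\K_n) \leq 7n$ bounds all type-(b) invariants; $u_q(\K_n) \leq 1+4n$ bounds all type-(c) invariants; and type-(d) bounds give at most $2$. So the conjecture lies beyond the reach of all standard machinery: for $n=1$ this is essentially the content of \cref{rem:types}, and for general $n$ the same phenomenon scales linearly.

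A proof would therefore require a genuinely new obstruction. I would try to exploit the rich structure of $\K_n$: its $(2n+1)$-fold cyclic symmetry through the tangle $T$, and the fact that $\K_n$ is a fibered positive knot with a canonical fiber surface built from $T(2,4n+2)$ and Hopf plumbings. One route is to seek equivariant refinements of known invariants (for example, equivariant Heegaard Floer or Khovanov homology) that respect the cyclic symmetry of $D_n$ and are sensitive to crossing changes that break it; the hope is that such invariants scale with the order of symmetry and produce a bound linear in $n$. A complementary route is a structural argument: prove that every unknotting sequence must, locally near each copy of $T$, contain some minimum number of crossing changes, and then combine these local contributions across the $2n+1$ copies to force at least $2+7n$ total. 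This would presumably require showing that crossing changes affecting different copies of $T$ cannot cooperate to reduce the local cost, perhaps via an analysis of how crossing changes modify the fiber surface and its Hopf plumbing decomposition. The main obstacle is of course that the conjecture is currently open: the paper itself emphasizes that no existing invariant reaches the required bound, so any successful proof must either introduce a new invariant or a new combinatorial/geometric argument specific to this family. A concrete starting point would be to compute candidate equivariant invariants on the first few $\K_n$, and to look for patterns scaling linearly in $n$ that cannot be matched by the known upper bounds on $u_a$, $u_q$, and $g_4$.
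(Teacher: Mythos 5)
There is a genuine gap: you have not proved the statement, and indeed no proof exists---\cref{conj:general} is stated in the paper as a conjecture precisely because the lower bound $u(\K_n)\geq 2+7n$ is open. Your text correctly reduces the problem to that lower bound (the upper bound is \cref{prop:counterexample_general}\eqref{item:uD}) and correctly observes that all standard obstructions are capped by $g_4(\K_n)=2+5n$, $u_a(\K_n)\leq 7n$, and $u_q(\K_n)\leq 1+4n$; but this is exactly the paper's own discussion in \cref{rem:types} and in the paragraph following \cref{conj:general}, where the authors also note that the only visible opening is to bound $u_a(\K_n)$ from below for $n>1$ (which, note, could at best give $u\geq 7n$, still short of $2+7n$). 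What you add beyond that is a research program, not an argument.

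Two concrete cautions about the directions you sketch. First, the ``local cost near each copy of $T$'' strategy conflates diagrammatic unknotting with the unknotting number: an unknotting sequence for $\K_n$ need not perform its crossing changes in the diagram $D_n$ at all, and the paper's own Theorems~\ref{thm:unknotting_sequence} and~\ref{thm:counterexample} illustrate exactly this distinction (the minimal number of changes \emph{within} $D$ is $9$, yet this does not establish $u(\K)=9$). Any localization argument must therefore control arbitrary diagrams obtained after isotopy, where the copies of $T$ lose their individuality; even for connected sums, where the ``pieces'' are far better separated, additivity of $u$ is a long-standing open problem, so a combination lemma of the kind you want is far from routine. Second, equivariant refinements face the immediate problem that an unknotting sequence has no reason to respect the cyclic symmetry of $D_n$, so an invariant of the symmetric object does not obviously constrain asymmetric crossing changes. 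In short: the proposal is a reasonable summary of why the conjecture is hard, but it contains no step that advances the lower bound beyond what the paper already records.
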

Note that this would imply that
none of the $\K_n$ can be obtained from the unknot by plumbing trefoils,
and that all of them are counterexamples to \cref{conj:pos_fib}.
It would even imply that the difference between the unknotting number and the $3$-genus can be arbitrarily large for fibered positive knots (which is not even known for general knots).
As shown in \eqref{item:genus} and \eqref{item:uq},
$g_4(\K_n), u_q(\K_n) \leq g(\K_n)$, so no lower bound for $u$ that is also a lower bound
for $g_4$ or $u_q$ (compare to \cref{rem:types}) can be used to establish \cref{conj:general}.
On the other hand, the best upper bound we found for $u_a(\K_n)$ with $n > 1$ is $7n$,
so a possible strategy to show that $u(\K_n) > g(\K_n)$ for $n > 1$ would be to bound $u_a(\K_n)$ from below.
\begin{proof}[Proof of \cref{prop:counterexample_general}]
    \eqref{item:fibpos} All crossings in $D_n$ are positive and thus $\K_n$ is positive. To see that $\K_n$ is fibered, 
    observe that $\K_n$ is obtained from the unknot by iterated Hopf plumbing. In particular, a Seifert surface for $\K_n$ can be obtained from the fiber surface of the torus link $T(2,4n+2)$ by plumbing $6n+3$ positive Hopf bands, three for each of the $2n+1$ copies of $T$, as shown in \cref{fig:counterexampletangle}.

   \eqref{item:notbraidpos} To see that $\K_1$ is not braid positive, we compute in~\cite{data} Ito's normalized version of the HOMFLYPT polynomial $\tilde{P}_{\K_1}(\alpha,z)$ and observe that some of its coefficients are negative, failing a condition for braid positive knots~\cite[Theorem~1.1]{Ito}. In particular, we observe that $\tilde{P}_{\K_1}(\alpha,0)=-\alpha^4+7\alpha^3+9\alpha^2$. Applying the skein relation for the HOMFLYPT polynomial inductively to the $3n$ and $4n$ crossings involved in the crossing changes in \cref{fig:tangle_unknotting}(b) and (c), we can 
   inductively reduce the general case to the case $n=1$. In particular, it follows that $$\tilde{P}_{\K_n}(\alpha,0)=-n\alpha^{2n+2}+(2+5n)\alpha^{2n+1}+(3+6n)\alpha^{2n},$$ which has a negative coefficient and thus $\K_n$ is not braid positive.

    \eqref{item:genus} The genus of the surface obtained by applying Seifert's algorithm to $D_n$ equals $2+5n$. Since $D_n$ is positive, 
    this surface realizes the $3$- and $4$-genus of $\K_n$. 

    \eqref{item:uD} The lower bound for the unknotting number is given by the $4$-genus of $\K_n$. To get the upper bound, we describe an explicit unknotting sequence in $D_n$. The tangle $T$ can be transformed into a tangle consisting of a single positive (respectively negative) crossing by performing three (resp. four) crossing changes, as shown in \cref{fig:tangle_unknotting}(b) (resp. in \cref{fig:tangle_unknotting}(c)). We perform each of these two transformations in $n$ of the $2n+1$ copies of $T$ in $\K_n$, thus replacing $2n$ copies of $T$ by $n$ positive and $n$ negative crossings. In the remaining tangle, we perform two crossing changes, as shown in \cref{fig:tangle_unknotting}(a). The resulting diagram represents the unknot and is related to $D_n$ by a sequence of $2+3n+4n$ crossing changes. 

  The lower bounds in
\eqref{item:ua} and \eqref{item:uq} are given by the Montesinos trick: the double branched cover $\Sigma_2(K)$ of $K$ can be obtained from $S^3$ by
surgery on a link with $u_q(K)$ components~\cite{McCoy_Zentner}.
Moreover, $\Sigma_2(K)$ can be obtained from an integer homology sphere
by surgery on a link with $u_a(K)$ components (since a knot with
Alexander polynomial~1 has a $\Z HS^3$ as double branched covering).
  In particular, the minimal numbers of required generators of $H_1(\Sigma_2(K); \Z)$ is a lower bound for $u_a(K)$ and~$u_q(K)$. To compute the first homology of the double branched cover we recall that the diagram $D_n$ consists of the cyclic composition of $2n+1$ copies of the tangle $T$. From that, it is not hard to see that the double branched cover of $\K_n$ is obtained by taking $S^1\times (S^2\setminus \bigcup_{2n+1} \mathring D^2)$ and gluing $2n+1$ copies of the double branched cover of $T$ to it. On the other hand, the double branched cover of $T$ can be seen to be the figure eight knot sister $m003$, a $1$-cusped hyperbolic manifold realizing the minimum of volumes among such manifolds. (The other minimizer is the figure eight knot complement.) $m003$ can be seen as the complement of a nullhomologous knot in the lens space $L(5,1)$ and thus has homology $\Z\oplus\Z_5$. In SnapPy~\cite{SnapPy}, \verb|Manifold('L5a1(-5,1)(0,0)').identify()| shows that the $(-5)$-surgery on one of the components
of the Whitehead link $L5a1$ is diffeomorphic to $m003$. (From that surgery description all the above statements are easy to verify.) Now a simple application of the Mayer--Vietoris sequence yields 
    $$ H_1(\Sigma_2(\K_n); \Z)=\left\{ \begin{array}{ll}
\Z_5^{2n+1}\oplus \Z_{2n+1} & \text{if}\ \ 2n+1\equiv 0 \mod 5,\\[0.5em]
\Z_5^{2n}\oplus \Z_{5(2n+1)} & \text{otherwise.} \end{array} \right.
    $$
    In particular, the number of generators in $H_1(\Sigma_2(\K_n);\Z)$ is either $2n+1$ or $2n+2$ depending on the divisibility of $2n+1$. 

    The upper bound for $u_a(\K_n)$ and $u_q(\K_n)$ are obtained by explicit construction.
Performing the crossing changes shown in \cref{fig:tangle_unknotting} (b) and (c) each in $n-1$ copies of $T$,
 yields the knot $\K_1$, for which $u_a(\K_1) \leq 7$ has already been established in \cref{prop:g4uauqK}. Overall, this gives $u_a(\K_n) \leq 3(n-1) + 4(n-1) + 7 = 7n$ as desired.
    \cref{fig:untangling_tangle} shows a sequence of two proper rational tangle replacements transforming $T$ into a single positive crossing. We apply these to $2n-1$ of the tangles $T$ in $D_n$, and to one of the remaining tangles $T$ we just apply the first proper rational tangle replacement from \cref{fig:untangling_tangle}. This yields the knot shown in \cref{fig:rational_unknotting} on the left. The proper rational unknotting replacement shown in that figure yields the closure of $T$, which is the torus knot $T(2,5)$ with proper rational unknotting number $u_q(T(2,5))=1$. In total, we can perform $4n+1$ proper rational tangle replacements to get the unknot, getting the claimed upper bound.
\end{proof}

\let\MRhref\undefined
\bibliographystyle{hamsalpha_mod}
\bibliography{ref.bib}
\end{document}